\newtheorem{theorem}{Theorem}[section]
\newtheorem{corollary}[theorem]{Corollary}
\newtheorem{proposition}[theorem]{Proposition}
\theoremstyle{definition}
\newtheorem{definition}[theorem]{Definition}
\newtheorem{example}[theorem]{Example}
\numberwithin{equation}{section}
\DeclareMathOperator{\supp}{supp}
\def\R{{\mathbb R}}  
\newcommand{\N}{\mathbb{N}}  
\newcommand{\C}{\mathbb{C}} 
\newcommand{\q}{\left\{}
\newcommand{\w}{\right\}}
\newcommand{\re}{\mathbb{R}}
\newcommand{\nn}{\mathbb{N}}
\newcommand{\ap}{\alpha}
\newcommand{\lp}{\left(}
\newcommand{\rp}{\right)}
\begin{document}
\begin{frontmatter}


\title{On a construction method of new moment sequences}



\author[label1]{Seunghwan Baek}
\ead{seunghwan@knu.ac.kr}
\address[label1]{Department of Mathematics,
Kyungpook National University, Daegu 41566, Republic of Korea}

\author[label1]{Hayoung Choi}
\ead{hayoung.choi@knu.ac.kr}

\author[label3]{Seonguk Yoo\corref{cor1}}
\ead{seyoo@gnu.ac.kr}
\cortext[cor1]{Corresponding author}
\address[label3]{Department of Mathematics Education and RINS, Gyeongsang National University, Jinju 52828, Republic of Korea}

\begin{abstract}
In this paper we provide a way to construct new moment sequences from a given moment sequence.
An operator based on multivariate positive polynomials is applied to get the new moment sequences.
A class of new sequences is corresponding to a unique symmetric polynomial; if this polynomial is positive, then the new sequence becomes again a moment sequence.
We will see for instance that a new sequence generated from minors of a Hankel matrix of a Stieltjes moment sequence is also a Stieltjes moment sequence.
\end{abstract}

\begin{keyword}
moment sequence, representing measure, positive semidefinite, totally positive matrix, symmetric positive polynomial.
\MSC[2020]{\ Primary 47A57, 44A60; Secondary 15B48, 15A29, 15-04}
\end{keyword}

\end{frontmatter}




\section{Introduction}
Let $\mathbb{N}_0$ be the set of nonnegative  integers and let $\mathbb{R}$ be the set of real numbers.
A nonnegative Borel measure $\mu$ on $\mathbb{R}$ is called a \emph{$K$-measure} if its support, denoted by $\supp(\mu)$, is contained in a closed set $K\subset \mathbb{R}$.
The symbol $\R[x]$ denotes the ring of polynomials in $x$ with real coefficients.
The integral $\int_{K}x^{n} d\mu$, if it exists, is called the \emph{$n$-th moment of the measure $\mu$}.
A sequence $\alpha=(\alpha_n)_{n \in \N_0}$ is said to \emph{admit a $K$-measure $\mu$} if
\begin{equation}\label{eq:moments}
\alpha_{n} = \int_{K} x^{n} d\mu \quad \text{for all }
n \in \N_0.
\end{equation}
Such $\mu$ is called a \emph{$K$-representing measure for $\alpha$} and $\alpha$ is referred to as  a \emph{$K$-moment sequence}. When $K=\R$ (respectively, $K=[0,\infty)$, $K=[0,1]$), the sequence $\alpha$ is also called a \emph{Hamburger} (respectively, \textit{Stieltjes, Hausdorff}) \textit{moment sequence}. A moment sequence is called \emph{determinate}, if it admits  a unique  measure  such that \eqref{eq:moments} holds; otherwise it is called \emph{indeterminate}.
For more information, see  \cite{book:momentproblem} and references therein. 

Given a sequence $\alpha=(\alpha_n)_{n \in \N_0}$, we denote
\begin{equation}\label{eq:Hankelmatrix}
H_m(\alpha):=
\begin{bmatrix}
\alpha_{0} & \alpha_{1} & \cdots & \alpha_{m} \\
\alpha_{1} & \alpha_{2} & \cdots & \alpha_{m+1} \\
\vdots & \vdots & \ddots & \vdots \\
\alpha_{m} & \alpha_{m+1} & \cdots & \alpha_{2m} \\
\end{bmatrix}.
\end{equation}
For a real sequence $\alpha=(\alpha_n)_{n \in \N_0}$
let $E\alpha\equiv E(\alpha)$ denote the shifted sequence; that is,
\begin{equation*}
E\alpha= \lp \alpha_{n+1} \rp_{n\in \N_0} .
\end{equation*}
A necessary and sufficient condition for a moment sequence is following.
\begin{theorem}\label{thm:equivcond}
Let $\alpha$ be a real sequence. Then the followings hold:
\begin{enumerate}[(i)]
\item $\alpha$ is a Hamburger moment sequence if and only if $H_m(\alpha)$ is positive semidefinite for all $m \in \N_0$
\item $\alpha$ is a Stieltjes moment sequence if and only if both $H_m(\alpha)$ and ${H}_m(E\alpha)$ are positive semidefinite for all $m \in \N_0$, equivalently, $H_m(\alpha)$ is totally positive for all  $m \in \N_0$.
\end{enumerate}
\end{theorem}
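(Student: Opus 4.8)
\section*{Proof proposal}

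The plan is to separate necessity (the ``only if'' directions) from sufficiency (the ``if'' directions): necessity is an elementary integral computation, while sufficiency is the substantive part and rests on a description of nonnegative univariate polynomials together with a Riesz-type representation result.

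For necessity, introduce the Riesz functional $L\colon\R[x]\to\R$ determined by $L(x^n)=\alpha_n$ and extended linearly. If $\alpha$ admits a representing measure $\mu$ on $K$, then for any $(c_0,\dots,c_m)\in\R^{m+1}$ and $p(x)=\sum_{i=0}^{m}c_i x^i$ we get $\sum_{i,j}c_ic_j\alpha_{i+j}=\int_K p(x)^2\,d\mu\ge 0$, which is precisely $H_m(\alpha)\succeq 0$. In the Stieltjes case the same computation applied to $\int_K x\,p(x)^2\,d\mu\ge 0$, valid because $x\ge 0$ on $K=[0,\infty)$, gives $H_m(E\alpha)\succeq 0$. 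This settles both forward implications.

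For sufficiency I would invoke Haviland's theorem: $L$ is represented by some $K$-measure if and only if $L(p)\ge 0$ for every $p\in\R[x]$ that is nonnegative on $K$. The bridge to the Hankel conditions is the classical description of nonnegative polynomials. On $K=\R$ each nonnegative polynomial is a sum of squares, so Haviland's criterion reduces to $L(q^2)\ge 0$ for all $q$, i.e.\ to $H_m(\alpha)\succeq 0$ for every $m$. On $K=[0,\infty)$ each nonnegative polynomial has the form $\sigma_0+x\sigma_1$ with $\sigma_0,\sigma_1$ sums of squares, so the criterion reduces to $L(q^2)\ge 0$ and $L(xq^2)\ge 0$ for all $q$, that is, to $H_m(\alpha)\succeq 0$ and $H_m(E\alpha)\succeq 0$ for every $m$. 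A constructive alternative would make $L$ a semi-inner product on $\R[x]$, realize multiplication by $x$ as a symmetric operator (nonnegative under the extra Stieltjes hypothesis), and extract $\mu$ from its spectral measure.

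The remaining and most delicate point is the equivalence in (ii) between ``$H_m(\alpha)$ and $H_m(E\alpha)$ both positive semidefinite for all $m$'' and ``$H_m(\alpha)$ totally positive for all $m$,'' and this is the step I expect to be the main obstacle: positive semidefiniteness alone says nothing about total positivity, so the Hankel structure must be used decisively. My plan is to argue, via the Andr\'eief (Cauchy--Binet) identity, that an arbitrary minor of $H_m(\alpha)$ equals a constant multiple of $\int\cdots\int \det[x_s^{i_p}]\,\det[x_s^{j_q}]\,d\mu(x_1)\cdots d\mu(x_k)$, a product of two generalized Vandermonde determinants; since $\supp(\mu)\subseteq[0,\infty)$, on the region $0<x_1<\cdots<x_k$ both determinants are positive, and symmetrizing shows the whole integral is nonnegative, forcing the minor to be nonnegative and hence $H_m(\alpha)$ totally positive. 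The converse is immediate: if $H_m(\alpha)$ is totally positive then in particular all principal minors of $H_m(\alpha)$ and of $H_m(E\alpha)$ --- the latter being a submatrix of $H_{m+1}(\alpha)$ --- are nonnegative, so both matrices are positive semidefinite. Alternatively one may simply cite the Gantmacher--Krein characterization of totally positive Hankel matrices for this equivalence.
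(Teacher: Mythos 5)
Your proposal is correct, but note that the paper itself gives no proof of this theorem: it is stated as a classical fact (Hamburger's and Stieltjes' theorems plus the Gantmacher--Krein characterization of totally positive Hankel matrices), with the reader implicitly referred to Schm\"udgen's book. So there is no in-paper argument to compare against line by line; what can be said is that your proof is assembled from exactly the toolkit the paper deploys later for its own results. Your sufficiency step via Haviland's theorem together with the decompositions ``nonnegative on $\R$ $\Rightarrow$ sum of squares'' and ``nonnegative on $[0,\infty)$ $\Rightarrow$ $\sigma_0+x\sigma_1$'' is the same mechanism as the paper's Theorem on $T_p$ for $d=1$ (which cites \cite[Corollary 3.25]{book:momentproblem} for the latter decomposition), and your Andr\'eief-identity argument for total positivity --- writing a $k\times k$ minor of $H_m(\alpha)$ as $\frac{1}{k!}\int\cdots\int\det[x_s^{i_p}]\det[x_s^{j_q}]\,d\mu^{\otimes k}$ and invoking positivity of generalized Vandermonde determinants at ordered positive nodes --- is precisely the device (via \cite{YWZ}) that the paper uses in its proof of Theorem \ref{thm:Ln}, just run at the level of the measure rather than of the symmetrized polynomial. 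Two small points you handled correctly but should keep explicit in a write-up: for the converse in (ii) you need that a \emph{symmetric} matrix with all principal minors (not merely leading ones) nonnegative is positive semidefinite, and that the principal minors of $H_m(E\alpha)$ are (non-principal) minors of $H_{m+1}(\alpha)$, hence nonnegative under total positivity. With those in place the argument is complete.
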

However, it is in general not easy to check if a given sequence is a Hamburger (or Stieltjes) moment sequence; showing positivity of the Hankel matrix $H_m(\alpha)$ for all nonnegative integer $m$ is usually very difficult.
Sometimes, through simple observation, one can confirm that some new sequences generated from a given moment sequence become the moment sequence again.
For example, if $(\alpha_n)_{n\in \N_0}$ and $(\beta_n)_{n\in \N_0}$ are Hamburger (respectively, Stieltjes) moment sequences, then both $(\alpha_n+\theta\beta_n)_{n\in \N_0}$ and $(\alpha_n\beta_n)_{n\in \N_0}$ are also Hamburger (respectively, Stieltjes) moment sequences for $\theta \geq 0$.
Moreover, if $(\alpha_n)_{n\in \N_0}$ is a Stieltjes moment sequence, then
so are $(\alpha_{n+1})_{n\in \N_0}$ and $(\alpha_{n+2})_{n\in \N_0}$; in particular
 $(\alpha_{n}\alpha_{n+2}+\alpha_{n+1}^2)_{n\in \N_0}$ is a Stieltjes moment sequence.
On the other hand, it is quiet non-trivial to check if $(\alpha_{n}\alpha_{n+2}-\alpha_{n+1}^2)_{n\in \N_0}$ is a Stieltjes moment sequence.

Based on given moment sequences, constructing new moment sequences via linear transformations and convolutions  has been studied in \cite{Ben11, Zhu16}. Moreover, Zhu provided below an interesting result:

\begin{theorem}[\cite{Zhu19}]\label{thm:L2}
For fixed $r,s\in \N$, if $\alpha=(\ap_n)_{n\in \N_0}$ is a Stieltjes moment sequence, then so is
$$\left( \det
\begin{bmatrix}
\alpha_n & \alpha_{n+r}\\
\alpha_{n+s} & \alpha_{n+r+s}
\end{bmatrix}
\right)_{n\in \N_0}
=
(\alpha_{n} \alpha_{n+r+s} - \alpha_{n+r}\alpha_{n+s})_{n\in \N_0}.
$$
\end{theorem}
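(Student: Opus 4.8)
The plan is to exploit the integral representation supplied by the Stieltjes hypothesis and reduce everything to a single nonnegative kernel. Since $\alpha$ is a Stieltjes moment sequence, by definition it admits a representing measure $\mu$ supported on $[0,\infty)$, so $\alpha_n=\int_0^\infty x^n\,d\mu(x)$ for every $n\in\N_0$. Writing $\beta_n:=\alpha_n\alpha_{n+r+s}-\alpha_{n+r}\alpha_{n+s}$, I would first turn each product of two moments into a double integral against the product measure $\mu\otimes\mu$ on $[0,\infty)^2$. Because every integrand in sight is a nonnegative power of a nonnegative variable, Tonelli's theorem makes the interchange legitimate and lets me combine the two terms into one double integral,
\begin{equation*}
\beta_n=\int_0^\infty\!\!\int_0^\infty\big(x^n y^{\,n+r+s}-x^{\,n+r}y^{\,n+s}\big)\,d\mu(x)\,d\mu(y).
\end{equation*}

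The next step is symmetrization. The product measure is invariant under the swap $(x,y)\mapsto(y,x)$, so the value of the integral is unchanged if I replace the integrand by one half of its symmetrization. A short factorization then shows this symmetrized integrand equals
\begin{equation*}
\tfrac12\,(xy)^n\,(x^r-y^r)(x^s-y^s),
\end{equation*}
whence $\beta_n=\tfrac12\int\!\int (xy)^n (x^r-y^r)(x^s-y^s)\,d\mu(x)\,d\mu(y)$. The decisive observation is that the factor $(x^r-y^r)(x^s-y^s)$ is \emph{nonnegative} for all $x,y\ge 0$: since $t\mapsto t^r$ and $t\mapsto t^s$ are both nondecreasing on $[0,\infty)$, the two differences always share the same sign. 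This is precisely where the Stieltjes setting $K=[0,\infty)$ and the assumption $r,s\in\N$ enter.

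Finally, I would realize $\beta$ as the moment sequence of an explicit measure. Set $d\sigma(x,y):=\tfrac12(x^r-y^r)(x^s-y^s)\,d\mu(x)\,d\mu(y)$, which by the previous paragraph is a nonnegative measure on $[0,\infty)^2$, and let $\nu$ be its pushforward under the multiplication map $(x,y)\mapsto xy$, a nonnegative measure on $[0,\infty)$. The change-of-variables formula for pushforwards then gives
\begin{equation*}
\int_0^\infty t^n\,d\nu(t)=\int_0^\infty\!\!\int_0^\infty (xy)^n\,d\sigma(x,y)=\beta_n,
\end{equation*}
so that $\nu$ is a Stieltjes representing measure for $\beta$, proving the claim.

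I expect the main obstacle to be measure-theoretic bookkeeping rather than the algebra. One must verify that the iterated integrals converge (guaranteed by finiteness of the $\alpha_n$), justify the interchange and the symmetrization through Tonelli using the sign information, and confirm that the pushforward $\nu$ has all moments finite — which follows a posteriori because each $\beta_n$ is a finite real number and is in fact nonnegative by the symmetric representation. The nontrivial conceptual content is concentrated in the factorization together with the monotonicity argument for $(x^r-y^r)(x^s-y^s)\ge 0$; once that kernel is exhibited, the pushforward construction delivers the representing measure essentially for free.
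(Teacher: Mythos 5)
Your proof is correct. The paper establishes this theorem as Example~\ref{exa:2.2}(i): it identifies the determinant sequence as $T_p(\alpha)$ for exactly the kernel $p(x,y)=\tfrac12(x^r-y^r)(x^s-y^s)$, notes that $p\ge 0$ on $[0,\infty)^2$, and invokes Theorem~\ref{thm:mainSM}, whose proof expresses the Hankel quadratic forms of $T_p(\alpha)$ and of its shift as $\int\!\!\int \bigl(\sum_j (xy)^j\xi_j\bigr)^2\,p(x,y)\,d\mu\,d\mu$ and $\int\!\!\int \bigl(\sum_j (xy)^j\xi_j\bigr)^2\,xy\,p(x,y)\,d\mu\,d\mu$, then concludes via the Hankel positivity criterion of Theorem~\ref{thm:equivcond}(ii). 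So your decisive ingredient --- the symmetrized nonnegative kernel $(x^r-y^r)(x^s-y^s)$ on the positive quadrant, justified by monotonicity of $t\mapsto t^r$ and $t\mapsto t^s$ --- is the same as the paper's, and your symmetrization under $(x,y)\mapsto(y,x)$ is the paper's passage from $p$ to $\overline{p}$ in a special case. Where you genuinely diverge is the endgame: rather than verifying positive semidefiniteness of the two families of Hankel matrices and appealing to the existence half of the Stieltjes theorem, you exhibit the representing measure explicitly as the pushforward of $\tfrac12(x^r-y^r)(x^s-y^s)\,d\mu\,d\mu$ under the multiplication map $(x,y)\mapsto xy$. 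This buys a concrete Stieltjes representing measure for the new sequence with no appeal to a solvability criterion, and in fact the same pushforward device would prove all of Theorem~\ref{thm:mainSM} (for any $p\ge 0$ on $[0,\infty)^d$, push $p\,d\mu^{\otimes d}$ forward under $(x_1,\ldots,x_d)\mapsto x_1\cdots x_d$). What the paper's route buys is a uniform operator framework ($T_p$ for general positive $p$, the Riesz--Haviland converse for $d=1$, and the later extension to $[a,b]$-measures), at the cost of being less explicit about the measure.
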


This result says that the determinants of any $2\times2$ submatrices of $H_m(\ap)$ can be considered as an operator to generate new Stieltjes moment sequences.
In the sequel, we will see this operator corresponds to a symmetric positive polynomials.
We then show more generally that any minor of $H_m$ can be a new Stieltjes moment sequence by checking positivity of the associated symmetric polynomial to the minor.
We will also provide several operators to generate new Stieltjes (or Hausdorff) moment sequences.

This paper consists of four sections. In Section 2, we provide a way to construct new sequences from a given moment sequences by polynomials and we show that if such polynomial is positive, then the induced sequence is a moment sequence. In Section 3, we discuss a symmetric polynomial generating new sequence and their properties. Moreover, we provide a generalization of Theorem \ref{thm:L2}. In Section 4, we also obtain similar results for Hausdorff moment sequence.

\section{New sequences and positive polynomials}

In this section we will use multisequences which admits a $K$-measure for $K\subset \mathbb{R}^d$ and positive multivariate polynomials to generate new moment sequence from given moment sequences. Let us first introduce a $K$-moment problem for $K\subset \mathbb{R}^d$.

Let $\beta \equiv ( \beta_{\pmb i})_{{\pmb i} \in \mathbb{N}_{0}^d}$ denote
a $d$-dimensional real multisequence and let $K$ denote a closed subset of $\mathbb{R}^d$. The full $K$-moment problem asks for conditions on $\beta$ such that there exists a positive Borel measure $\mu$, with $\supp (\mu) \subset K$, satisfying
\begin{equation*}
\beta_{\pmb i} = \int_K {\pmb x}^{\pmb i} d\mu \quad \text{for all }{\pmb i}\in \mathbb{N}_{0}^d ,
\end{equation*}
where  ${\pmb x}^{\pmb i} :=x_1^{i_1}\cdots x_d^{i_d}$ for ${\pmb x}\equiv (x_1,\ldots, x_d)\in \mathbb{R}^d$ and ${\pmb i} \equiv (i_1,\ldots, i_d) \in \mathbb{N}_{0}^d$.

Let $\mathbb{R}[{\pmb x}]=\mathbb{R}[x_1,\ldots,x_d]$ and let $p \equiv p(\pmb x)=\sum p_{\pmb i} {\pmb x}^{\pmb i} \in \mathbb{R}[{\pmb x}]$.
Corresponding to $\beta$, the \emph{Riesz functional} $\mathcal{L}\equiv \mathcal{L}_{\beta} :\mathbb{R}[{\pmb x}] \to \mathbb{R}$ is defined by
\begin{equation*}
\mathcal{L}\left( \sum p_{\pmb i} {\pmb x}^{\pmb i}  \right) := \sum p_{\pmb i} \beta_{\pmb i}
\end{equation*}
We say that $\mathcal{L}$ is \emph{$K$-positive} if
\begin{equation*}
\mathcal{L}_{\beta} (p) \geq 0 \quad \forall p\in \mathbb{R}[{\pmb x}]: p|_K \geq 0.
\end{equation*}
When $K=\R^d$, we call simply that $\mathcal{L}$ is \emph{positive} instead of $K$-positive. The $K$-positivity of $\mathcal{L}_{\beta}$ is a necessary condition for $\beta$ to admit a $K$-measure. Conversely, the classical theorem of Riesz and Haviland provides a fundamental existence criterion for $K$-representing measures.

\begin{theorem}[Riesz-Haviland Theorem]\label{thm:RieszHavil}
A $d$-dimensional real multisequence $\beta$ admits a representing measure supported in the closed set $K\subset \mathbb{R}^d$ if and only if $\mathcal{L}_{\beta}$ is $K$-positive.
\end{theorem}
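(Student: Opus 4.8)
The plan is to prove both implications, the forward direction being immediate and the converse requiring the classical extension machinery. For necessity, suppose $\beta$ admits a representing measure $\mu$ with $\supp(\mu) \subset K$. If $p \in \R[\pmb x]$ satisfies $p|_K \geq 0$, then since $\mu$ is a positive measure supported on $K$ we have $\mathcal{L}_\beta(p) = \int_K p \, d\mu \geq 0$; hence $\mathcal{L}_\beta$ is $K$-positive. The substance of the theorem lies in the converse, which I would establish in three stages.

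Assume $\mathcal{L}_\beta$ is $K$-positive. First, I enlarge the domain: let $C_c(K)$ denote the continuous real functions on $K$ with compact support, and set $E := \R[\pmb x] + C_c(K)$, viewed as a space of functions on $K$, equipped with the convex cone $C := \{ f \in E : f \geq 0 \text{ on } K \}$. The key structural observation is that $\R[\pmb x]$ is cofinal in $E$ relative to $C$: given $f = q + g$ with $q \in \R[\pmb x]$ and $g \in C_c(K)$, the boundedness of $g$ (say $|g| \leq M$) yields $q + M \in \R[\pmb x]$ with $(q + M) - f = M - g \geq 0$ on $K$. Since $\mathcal{L}_\beta$ is nonnegative on $\R[\pmb x] \cap C$ by hypothesis, the M. Riesz extension theorem furnishes a linear functional $\widetilde{\mathcal{L}} : E \to \R$ extending $\mathcal{L}_\beta$ and nonnegative on $C$.

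Second, restricting $\widetilde{\mathcal{L}}$ to $C_c(K)$ produces a positive linear functional, to which the Riesz--Markov--Kakutani representation theorem applies: there is a positive Borel measure $\mu$ on $K$ with $\widetilde{\mathcal{L}}(g) = \int_K g \, d\mu$ for every $g \in C_c(K)$.

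Third, and this is where the real work lies, I must show that $\mu$ represents $\beta$, that is $\int_K \pmb x^{\pmb i} \, d\mu = \beta_{\pmb i}$ for all $\pmb i$. The difficulty is that the representation is so far known only on $C_c(K)$, whereas monomials are unbounded. I would fix an increasing sequence of cutoffs $\phi_n \in C_c(K)$ with $0 \leq \phi_n \leq 1$ and $\phi_n \uparrow 1$ pointwise on $K$. For an even monomial $\pmb x^{2\pmb i} \geq 0$, the function $\pmb x^{2\pmb i}(1 - \phi_n)$ lies in $C$, so nonnegativity of $\widetilde{\mathcal{L}}$ gives $\int_K \pmb x^{2\pmb i} \phi_n \, d\mu = \widetilde{\mathcal{L}}(\pmb x^{2\pmb i}\phi_n) \leq \widetilde{\mathcal{L}}(\pmb x^{2\pmb i}) = \beta_{2\pmb i}$; monotone convergence then yields $\int_K \pmb x^{2\pmb i} \, d\mu \leq \beta_{2\pmb i} < \infty$, so every monomial is $\mu$-integrable. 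The main obstacle is upgrading these inequalities to the exact equalities: this I expect to handle by writing $\mathcal{L}_\beta(\pmb x^{\pmb i}) - \int_K \pmb x^{\pmb i}\phi_n \, d\mu = \widetilde{\mathcal{L}}\lp \pmb x^{\pmb i}(1-\phi_n) \rp$ and dominating $|\pmb x^{\pmb i}(1-\phi_n)|$ by the tail of an even monomial whose full moment is finite, forcing the remainder to vanish as $n \to \infty$. Care is needed because $\widetilde{\mathcal{L}}$ is not a priori an integral off $C_c(K)$, so the domination must be carried out through the cone inequalities rather than directly under the integral sign.
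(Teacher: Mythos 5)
The paper does not actually prove this statement: it quotes the classical Riesz--Haviland theorem (cf.\ \cite{book:momentproblem}) and uses it as a black box, so there is no in-paper argument to compare against. Your outline is the standard textbook proof, and its skeleton is right: necessity is immediate, and for sufficiency you correctly identify the three ingredients --- M.~Riesz extension over the cone of functions nonnegative on $K$, with $\mathbb{R}[{\pmb x}]$ cofinal in $\mathbb{R}[{\pmb x}]+C_c(K)$; Riesz--Markov representation of the restriction to $C_c(K)$; and a limiting argument identifying the moments of $\mu$ with $\beta$. The first two stages are sound as written (one small point worth a sentence: to view $\mathcal{L}_\beta$ as a functional on a space of \emph{functions on $K$} you should note that $K$-positivity forces $\mathcal{L}_\beta(p)=\mathcal{L}_\beta(q)$ whenever $p=q$ on $K$).

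The gap is in the third stage, exactly where you locate the ``real work.'' Dominating $|{\pmb x}^{\pmb i}(1-\phi_n)|$ by the tail $P({\pmb x})(1-\phi_n)$ of an even polynomial $P$ with $|{\pmb x}^{\pmb i}|\le P$ on $K$ only gives $|\widetilde{\mathcal{L}}({\pmb x}^{\pmb i}(1-\phi_n))| \le \widetilde{\mathcal{L}}(P(1-\phi_n)) = \mathcal{L}_\beta(P) - \int_K P\phi_n\,d\mu$, which converges to $\mathcal{L}_\beta(P)-\int_K P\,d\mu$; at this point you only know $\int_K P\,d\mu \le \mathcal{L}_\beta(P)$ from your monotone-convergence step, and the vanishing of this difference is precisely the equality of moments you are trying to prove --- the argument as sketched is circular. (Finiteness of $\int_K P\,d\mu$ does make $\int_K P(1-\phi_n)\,d\mu\to 0$, but $\widetilde{\mathcal{L}}(P(1-\phi_n))$ is not that integral, as you yourself observe.) The standard repair is a rescaling trick rather than a fixed dominant: fix $k$ with $2k>i_1+\cdots+i_d$ and set $P=(1+x_1^2+\cdots+x_d^2)^k$; for every $\varepsilon>0$ there is a compact set $K_\varepsilon$ outside of which $|{\pmb x}^{\pmb i}|\le \varepsilon P({\pmb x})$, and choosing the cutoff $\phi$ equal to $1$ on $K_\varepsilon$ yields the cone inequalities $\varepsilon P \pm {\pmb x}^{\pmb i}(1-\phi) \ge 0$ on $K$, hence $|\widetilde{\mathcal{L}}({\pmb x}^{\pmb i}(1-\phi))| \le \varepsilon\,\mathcal{L}_\beta(P)$, with the \emph{full} finite value $\mathcal{L}_\beta(P)$ (known from $\beta$) on the right instead of a tail. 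Letting $\varepsilon\to 0$ along an exhausting family of cutoffs and using dominated convergence for $\int_K {\pmb x}^{\pmb i}\phi\,d\mu$ (the dominant $|{\pmb x}^{\pmb i}|$ being $\mu$-integrable by your step two) closes the proof.
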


Using the Riesz-Haviland Theorem, we can prove that some new sequences generated with  terms in a   Stieltjes  moment sequence are also Stieltjes  moment sequences.
We denote $\mathcal{A}$ for the set of all real sequences.

\begin{definition}\label{def-ptos}
For a given polynomial $p=\sum p_{\pmb i} {\pmb x}^{\pmb i} \in \mathbb{R}[\pmb x]$, we define $T_p:\mathcal{A}\to \mathcal{A}$ given by
\begin{equation}\label{def:Tp}
  T_p(\alpha)=\tilde{\alpha}=(\tilde{\alpha}_n)_{n\in \N_0} \quad \text{for all }\alpha\in \mathcal{A},
\end{equation}
where
$$\tilde{\alpha}_n = \sum p_{\pmb i}\alpha_{n+i_1}\alpha_{n+i_2}\cdots \alpha_{n+i_d} \ \ \text{for all} \ \ n \in \N_0.$$
\end{definition}

For example, when $p=\sum p_i x^i$ is a one-variable polynomial, $\tilde{\alpha}_n$ is defined as
$$\tilde{\alpha}_n =  \sum p_i \alpha_{n+i}.$$

\begin{definition}
Let $K$ be a closed subset of $\mathbb{R}$ and $p\in \mathbb{R}[\pmb x]$.
It is said that \emph{$T_p$ has $K$-moment property} if for any $K$-moment sequence $\alpha$, $T_p(\alpha)$ is $K$-moment sequence.
When $K=[0,\infty)$ (respectively, $\R$, $[0,1]$), we simply say that $T_p$ has \emph{Stieltjes} (respectively, \textit{Hamburger, Hausdorff}) \textit{moment property}.
\end{definition}

The following is one of the main results.

\begin{theorem}[$d=1$]\label{thm:mainSM1d}
Let $p \in \mathbb{R}[x]$ be a one-variable polynomial. Then $T_p$ has the Stieltjes moment property if and only if
$$p|_{[0,\infty)} \geq 0.$$
\end{theorem}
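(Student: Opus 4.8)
The plan is to recognize $T_p$, in the one-variable case, as the operation of \emph{weighting the representing measure by $p$}. Concretely, suppose $\alpha$ is a Stieltjes moment sequence with $[0,\infty)$-representing measure $\mu$, so that $\alpha_n=\int_0^\infty x^n\,d\mu$ for all $n$. Since $p=\sum_i p_i x^i$ has finitely many terms, I can interchange the finite sum with the integral to obtain
$$\tilde{\alpha}_n=\sum_i p_i\,\alpha_{n+i}=\sum_i p_i\int_0^\infty x^{n+i}\,d\mu=\int_0^\infty x^n\,p(x)\,d\mu(x).$$
Thus $T_p(\alpha)$ is the moment sequence of the (a priori signed) measure $p\,d\mu$, and the entire theorem reduces to understanding when $p\,d\mu$ is a genuine nonnegative $[0,\infty)$-representing measure.

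For the sufficiency direction, assume $p|_{[0,\infty)}\ge 0$. Because $\supp(\mu)\subset[0,\infty)$ and $p\ge 0$ there, the set function $\nu(A):=\int_A p\,d\mu$ is a nonnegative Borel measure supported in $[0,\infty)$. Its moments $\int_0^\infty x^n\,d\nu=\sum_i p_i\,\alpha_{n+i}$ are finite, being finite linear combinations of the (finite) moments of $\mu$; in particular $\nu$ is a finite measure. By the computation above, $\tilde{\alpha}_n=\int_0^\infty x^n\,d\nu$ for every $n$, so $\nu$ is a $[0,\infty)$-representing measure for $T_p(\alpha)$ and hence $T_p(\alpha)$ is Stieltjes. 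Equivalently, one could check the $[0,\infty)$-positivity of $\mathcal{L}_{T_p(\alpha)}$ and invoke the Riesz--Haviland Theorem, but exhibiting $\nu$ directly is cleaner.

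For the necessity direction I argue by contraposition. Suppose $p(a)<0$ for some $a\ge 0$, and test $T_p$ on the Stieltjes moment sequence $\alpha=(a^n)_{n\in\N_0}$, whose representing measure is the point mass $\delta_a$ supported at $a\in[0,\infty)$. The reformulation gives $\tilde{\alpha}_n=\int_0^\infty x^n\,p(x)\,d\delta_a=p(a)\,a^n$, and in particular $\tilde{\alpha}_0=p(a)<0$. Since every moment sequence $\gamma$ must satisfy $\gamma_0=\int d\mu\ge 0$ (the $1\times1$ Hankel matrix $H_0(\gamma)=[\gamma_0]$ has to be positive semidefinite), $T_p(\alpha)$ is not even a Hamburger moment sequence, so $T_p$ fails the Stieltjes moment property. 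Hence the property forces $p\ge 0$ on $[0,\infty)$.

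The argument is short once the key identity $\tilde{\alpha}_n=\int_0^\infty x^n\,p\,d\mu$ is in hand; the only points requiring care are the justification that $\nu=p\,d\mu$ has finite moments (immediate from finiteness of the moments of $\mu$ together with the finiteness of the sum defining $p$) and the verification that the test sequences $(a^n)$ are genuinely Stieltjes. I do not expect a serious obstacle here: unlike the multivariate and higher-degree minor settings treated later, the one-variable case collapses to the elementary fact that multiplying a measure on $[0,\infty)$ by a function nonnegative on $[0,\infty)$ preserves nonnegativity, while a single negative value of $p$ at a point of $[0,\infty)$ is detected by the corresponding point mass.
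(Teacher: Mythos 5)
Your proof is correct, and the necessity half is essentially the paper's argument verbatim: both test $T_p$ against the point mass $\delta_a$ at a point where $p(a)<0$ and observe $\tilde{\alpha}_0=p(a)<0$, violating positive semidefiniteness of $H_0(\tilde{\alpha})$. The sufficiency half, however, takes a genuinely different route. The paper works at the level of the Riesz functional: it verifies $\mathcal{L}_{\tilde{\alpha}}(q)=\mathcal{L}_{\alpha}(pq)\ge 0$ for every $q$ nonnegative on $[0,\infty)$ and then invokes the Riesz--Haviland theorem (Theorem \ref{thm:RieszHavil}) to produce a representing measure for $T_p(\alpha)$. You instead exhibit the representing measure explicitly as $d\nu=p\,d\mu$, using the identity $\tilde{\alpha}_n=\int_0^\infty x^n p(x)\,d\mu(x)$; the only things to check are nonnegativity of $\nu$ (immediate from $p|_{[0,\infty)}\ge 0$ and $\supp(\mu)\subset[0,\infty)$) and finiteness of its moments (a finite sum of finite moments of $\mu$, which you correctly note also justifies the interchange of sum and integral). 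Your version is more elementary and more informative -- it avoids the nontrivial existence direction of Riesz--Haviland entirely and identifies the new measure concretely -- whereas the paper's functional formulation is the one that sets up its subsequent multivariate machinery. Note that your construction also extends to $d\ge 2$ as the pushforward of $p\,d\mu^{\otimes d}$ under $(x_1,\ldots,x_d)\mapsto x_1\cdots x_d$, which is in effect what the paper's proof of Theorem \ref{thm:mainSM} computes via quadratic forms. No gaps.
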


\begin{proof}
($\Leftarrow$) Let $\alpha = ( \ap_n )_{n \in \N_0}$ be a Stieltjes moment sequence. Suppose that $p=\sum p_{i} {x}^{i}$ is nonnegative on $[0, \infty)$ and $\tilde{\alpha}=T_p(\alpha)$ is defined as in \eqref{def:Tp}.
Then by Theorem \ref{thm:RieszHavil} it is enough to show that
$$\mathcal{L}_{\tilde{\alpha}}(q) \geq 0 \quad \forall q\in \mathbb{R}[x]: q|_{[0,\infty)} \geq 0.$$
Let  $q=\sum q_{i} {x}^{i}\in \mathbb{R}[x] $ be nonnegative on $[0, \infty)$.
Then it follows that
\begin{align*}
\mathcal{L}_{\tilde{\alpha}}(q(x))
=\sum_{i} q_i \tilde{\alpha}_i
=\sum_{i} \sum_{j} p_j q_i \alpha_{i+j}
=\mathcal{L}_{\alpha}(p(x)q(x))\geq 0.
\end{align*}
The last inequality holds since both $p$ and $q$ are nonnegative on $[0, \infty)$.
\smallskip

\noindent($\Rightarrow$) Suppose that $T_p$ has the Stieltjes moment property and there exists $\xi\in (0,\infty)$ such that $p(\xi) <0$.
Set $d\mu = \delta_{\xi}$ which is a dirac measure.
Let $\alpha=(\alpha_n)_{n\in\mathbb{N}_0}$ defined by
$$\alpha_n =\int_0^{\infty} x^n d\mu.$$
Then $\alpha$ is a Stieltjes moment sequence. Observe that if  $p=\sum p_{i} {x}^{i}$, then 
$$\tilde{\alpha}_0 = \sum p_i \alpha_i = \sum p_i \xi^i = p(\xi)< 0.$$
However, since $\tilde{\alpha}=T_p(\alpha)$ is a Stieltjes moment sequence, it follows that $\tilde{\alpha}_0 \geq 0$. This is a contradiction.
\end{proof}

\begin{example}
Suppose $\alpha=(\alpha_n)_{n\in \N_0}$ is a Stieltjes moment sequence.
\begin{enumerate}[(i)]
  \item For fixed $k \in \N$, consider a new sequence $\tilde{\alpha}=(\tilde{\alpha}_n)_{n\in \N_0}$ defined by $$\tilde{\alpha}_n=c_0\alpha_{n+k}+c_1\alpha_{n+k+1}+c_2\alpha_{n+k+2} \ \ \text{for} \ \ c_0,c_1,c_2 \in \R.$$
Actually, it is difficult to show that $\tilde{\alpha}$ is a Stieltjes moment sequence by checking the positivity of Hankel determinant of $\tilde{\alpha}$ (Sometimes, for a case like the Catalan number $\alpha_n$, we can find the exact Hankel determinant of $\tilde{\alpha}_n$ (\cite{Bou}), but such a case is rare.)
However, we can obtain a simple equivalent condition by using Theorem \ref{thm:mainSM1d}.
      In fact, $T_p(\alpha)=\tilde{\alpha}$, where $p(x)=x^k(c_0+c_1x+c_2x^2)$. If you want to know a sufficient and necessary condition that $\tilde{\alpha}$ is a Stieltjes moment sequence, it is enough to verify the quadratic polynomial $c_0+c_1x+c_2x^2\geq0$ on $[0,\infty)$; it is easy to obtain this condition.
  \item We know that $T_p(\alpha)$ is a Stieltjes moment sequence if and only if $p(x)\geq0$ on $[0,\infty)$, it is equivalent from \cite[Corollary 3.25]{book:momentproblem} that $p(x)=[f(x)]^2+x[g(x)]^2$ for some $f(x)=\sum f_i x^i$ and $g(x)=\sum g_j x^j \in \mathbb{R}[x]$. i.e., $\tilde{\alpha}\equiv(\tilde{\alpha}_n)_{n\in\N_0}$ is a Stieltjes moment sequence, where
    $$\tilde{\alpha}_n=\sum_{i,k} f_{i}f_ka_{n+i+k}+\sum_{j,\ell} g_{j}g_{\ell}a_{n+j+\ell+1}.$$
\end{enumerate}
\end{example}

Now we extend Theorem \ref{thm:mainSM1d}.

\begin{theorem}[$d\geq2$]\label{thm:mainSM}
Let $p \in \mathbb{R}[{\pmb x}]=\mathbb{R}[x_1,\ldots,x_d]$ with $d\geq 2$.
Then $T_p$ has the Stieltjes moment property if
$$p|_{[0,\infty)^d} \geq 0.$$
\end{theorem}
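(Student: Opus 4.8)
The plan is to mimic the ($\Leftarrow$) direction of Theorem \ref{thm:mainSM1d}, combining the Riesz--Haviland Theorem with an integral representation of the products that appear in the definition of $T_p$. Since $\alpha=(\alpha_n)_{n\in\N_0}$ is a Stieltjes moment sequence, by Theorem \ref{thm:equivcond} it admits a representing measure $\nu$ supported on $[0,\infty)$, so that $\alpha_k=\int_0^\infty t^k\,d\nu(t)$ for every $k\in\N_0$. The first step is to observe that a product of moments is a moment of the product measure: for any multi-index $\pmb i=(i_1,\ldots,i_d)$ and any $m\in\N_0$, Tonelli's theorem (all integrands are nonnegative) gives
\begin{equation*}
\alpha_{m+i_1}\cdots\alpha_{m+i_d}
=\int_{[0,\infty)^d}\prod_{\ell=1}^d t_\ell^{\,m+i_\ell}\,d\nu^{\otimes d}(\pmb t),
\end{equation*}
where $\nu^{\otimes d}$ is the $d$-fold product measure on $[0,\infty)^d$.

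The crucial point is that the definition of $T_p$ adds the \emph{same} shift $m$ to every index, so $\prod_\ell t_\ell^{\,m+i_\ell}=(t_1\cdots t_d)^m\,{\pmb t}^{\pmb i}$ and the factor $(t_1\cdots t_d)^m$ pulls out of the sum over $\pmb i$. Summing against the coefficients $p_{\pmb i}$ then yields the single formula
\begin{equation*}
\tilde\alpha_m
=\int_{[0,\infty)^d}(t_1\cdots t_d)^m\,p(\pmb t)\,d\nu^{\otimes d}(\pmb t),
\qquad m\in\N_0.
\end{equation*}

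To finish, I would invoke Riesz--Haviland (Theorem \ref{thm:RieszHavil}) with $K=[0,\infty)$: it suffices to check that $\mathcal{L}_{\tilde\alpha}(q)\ge 0$ for every $q=\sum_m q_m x^m$ nonnegative on $[0,\infty)$. Substituting the formula above and interchanging the finite sum with the integral gives
\begin{equation*}
\mathcal{L}_{\tilde\alpha}(q)=\sum_m q_m\tilde\alpha_m
=\int_{[0,\infty)^d} q(t_1\cdots t_d)\,p(\pmb t)\,d\nu^{\otimes d}(\pmb t).
\end{equation*}
On $[0,\infty)^d$ one has $t_1\cdots t_d\ge 0$, hence $q(t_1\cdots t_d)\ge0$ since $q\ge0$ on $[0,\infty)$; and $p(\pmb t)\ge0$ by hypothesis. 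The integrand is therefore nonnegative and the integral is $\ge0$, which is the claim. Equivalently, and more explicitly, the displayed formula shows that the pushforward of the nonnegative measure $p\,d\nu^{\otimes d}$ under the product map $\pmb t\mapsto t_1\cdots t_d$ is itself a Stieltjes representing measure for $\tilde\alpha$.

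I expect the main obstacle to be bookkeeping rather than conceptual: justifying the passage to the product measure and the interchange of the finite sums with the integral, all of which is legitimate because the polynomials have finitely many terms and every mixed moment of $\nu^{\otimes d}$ is a finite product of the $\alpha_k$. It is also worth noting why only the sufficient direction is asserted: a single Dirac mass $\delta_\xi$ gives $\alpha_k=\xi^k$ and hence $\tilde\alpha_0=p(\xi,\ldots,\xi)$, so test sequences of this kind probe $p$ only along the diagonal of $[0,\infty)^d$, and a converse in the style of Theorem \ref{thm:mainSM1d} cannot recover nonnegativity of $p$ on the entire orthant.
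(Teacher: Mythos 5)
Your proof is correct, and it shares the paper's central computation but finishes by a different criterion. Both arguments rest on the same key identity: writing each product $\alpha_{m+i_1}\cdots\alpha_{m+i_d}$ as an integral against the product measure $\nu^{\otimes d}$ and observing that the common shift $m$ factors out as $(t_1\cdots t_d)^m$. The paper, however, never isolates the single formula $\tilde\alpha_m=\int_{[0,\infty)^d}(t_1\cdots t_d)^m\,p(\pmb t)\,d\nu^{\otimes d}(\pmb t)$; it substitutes the integral representation directly into the quadratic forms $\sum_{j,k}\tilde\alpha_{j+k}\xi_j\xi_k$ and $\sum_{j,k}\tilde\alpha_{j+k+1}\xi_j\xi_k$, writes each as the integral of a perfect square times $p(\pmb x)$ (respectively times $(x_1\cdots x_d)\,p(\pmb x)$), and concludes from the Hankel/shifted-Hankel positivity criterion of Theorem \ref{thm:equivcond}(ii). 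You instead conclude via Riesz--Haviland, which makes the $d\ge 2$ case a nearly verbatim continuation of the paper's own proof of the $d=1$ case, and your closing observation is a genuine bonus: the pushforward of the nonnegative finite measure $p\,d\nu^{\otimes d}$ under $\pmb t\mapsto t_1\cdots t_d$ is an \emph{explicit} $[0,\infty)$-representing measure for $T_p(\alpha)$, which is more information than Hankel positivity and, incidentally, makes the support statements of Theorem \ref{thm:mainHM1} transparent (the support of the pushforward lies in the image of $[a,b]^d$ under the product map). Your remark on why only sufficiency holds for $d\ge2$ --- Dirac test sequences probe $p$ only on the diagonal --- is also accurate. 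The only bookkeeping point worth stating explicitly is the finiteness of $\int q(t_1\cdots t_d)\,p(\pmb t)\,d\nu^{\otimes d}$, which holds because it is a finite linear combination of finite products of the moments $\alpha_k$; with that said, all your interchanges are legitimate.
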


\begin{proof}
Let $\alpha = ( \ap_n )_{n \in \N_0}$ be a Stieltjes moment sequence.
Then there exists a positive Borel measure $\mu$ with $\supp (\mu) \subset [0,\infty)$ such that
\begin{equation*}
\alpha_n= \int_0^\infty x^n d\mu(x) \quad \text{for all }n\in \mathbb{N}_0.
\end{equation*}
Suppose that $p=\sum p_{\pmb i} {\pmb x}^{\pmb i}$ is nonnegative on $[0, \infty)^d$ and $\tilde{\alpha}=T_p(\alpha)$ is defined as in \eqref{def:Tp}.
Then it follows that for each $m\in \N_0$ the quadratic form
\begin{align*}
\sum_{j=0}^{m} \sum_{k=0}^{m} \tilde{\alpha}_{j+k} \xi_{j} \xi_{k}
&=
\sum_{j=0}^{m} \sum_{k=0}^{m} \sum_{{\pmb i}} p_{\pmb i} \alpha_{j+k+i_1} \cdots \alpha_{j+k+i_d} \xi_{j} \xi_{k}\\
&=
\sum_{j=0}^{m} \sum_{k=0}^{m} \sum_{{\pmb i}}  p_{\pmb i} \left(\prod_{\ell=1}^{d} \int_0^\infty x_{\ell}^{j+k+i_\ell} d\mu(x_\ell) \right) \xi_{j} \xi_{k}\\
&=
\int_0^\infty \cdots \int_0^\infty \sum_{j=0}^{m} \sum_{k=0}^{m}  (x_1\cdots x_d)^{j+k}\sum_{{\pmb i}} p_{\pmb i}   {\pmb x}^{\pmb i} \xi_{j} \xi_{k} d\mu \cdots d\mu\\
&=
\int_0^\infty \cdots \int_0^\infty \sum_{j=0}^{m} \sum_{k=0}^{m}  (x_1\cdots x_d)^{j+k} \xi_{j} \xi_{k}p({\pmb x})  d\mu \cdots d\mu \\
&=
\int_0^\infty \cdots \int_0^\infty \left( \sum_{j=0}^{m}   (x_1\cdots x_d)^{j} \xi_{j} \right)^2  p({\pmb x})  d\mu \cdots d\mu
\end{align*}
is nonnegative for all $\xi_0,\ldots, \xi_m \in \mathbb{C}$.
In the similar way, one can check that
\begin{align*}
&\sum_{j=0}^{m} \sum_{k=0}^{m} \tilde{\alpha}_{j+k+1} \xi_{j} \xi_{k}\\
&\ \ \ \ =  \int_0^\infty \cdots \int_0^\infty \left( \sum_{j=0}^{m}   (x_1\cdots x_d)^{j} \xi_{j} \right)^2  (x_1 \cdots x_d) p({\pmb x})  d\mu \cdots d\mu
\end{align*}
is nonnegative for each $m\in \mathbb{N}_0$.
Thus  $\tilde{\alpha}$ is also a Stieltjes moment sequence.
\end{proof}

The proof of Theorem \ref{thm:mainSM} also implies the following.

\begin{corollary}\label{coro:HM}
If $p \in \mathbb{R}[\bm{x}]$ is nonnegative on $\mathbb{R}^d$, then $T_p$ has the Hamburger moment property.
\end{corollary}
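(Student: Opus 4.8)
The plan is to reuse the quadratic-form computation from the proof of Theorem \ref{thm:mainSM} almost verbatim, observing that in the Hamburger setting only the first (unshifted) identity is needed. First I would take a Hamburger moment sequence $\alpha=(\alpha_n)_{n\in\N_0}$ and fix a representing measure $\mu$ on $\mathbb{R}$, so that $\alpha_n=\int_{\mathbb{R}} x^n\,d\mu(x)$ for all $n\in\N_0$. Setting $\tilde{\alpha}=T_p(\alpha)$ as in \eqref{def:Tp}, the goal reduces, by Theorem \ref{thm:equivcond}(i), to showing that $H_m(\tilde{\alpha})$ is positive semidefinite for every $m\in\N_0$. Unlike the Stieltjes case there is no shifted condition to verify, so the second (shifted) computation appearing in the proof of Theorem \ref{thm:mainSM} can simply be dropped.

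The key step is to expand the quadratic form $\sum_{j,k=0}^m \tilde{\alpha}_{j+k}\xi_j\xi_k$ exactly as before. Substituting the definition of $\tilde{\alpha}_{j+k}$ together with the integral representation $\alpha_{j+k+i_\ell}=\int_{\mathbb{R}} x_\ell^{j+k+i_\ell}\,d\mu(x_\ell)$ of each factor, and interchanging the finite sums with the $d$-fold product integral, I would arrive at
\begin{equation*}
\sum_{j=0}^m \sum_{k=0}^m \tilde{\alpha}_{j+k}\,\xi_j\xi_k
= \int_{\mathbb{R}} \cdots \int_{\mathbb{R}} \left( \sum_{j=0}^m (x_1\cdots x_d)^j \xi_j \right)^2 p(\pmb x)\, d\mu(x_1)\cdots d\mu(x_d).
\end{equation*}
The only change from Theorem \ref{thm:mainSM} is that each integral now runs over all of $\mathbb{R}$, so the product measure $d\mu\cdots d\mu$ is supported on $\mathbb{R}^d$ rather than on $[0,\infty)^d$.

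Finally I would invoke the hypothesis: since $p\geq 0$ on all of $\mathbb{R}^d$ and the bracketed factor is a real square, the integrand is nonnegative throughout the support of the product measure, whence the integral---and therefore the quadratic form---is nonnegative for every choice of $\xi_0,\ldots,\xi_m$. As this holds for each $m$, we obtain $H_m(\tilde{\alpha})\succeq 0$ for all $m\in\N_0$, and Theorem \ref{thm:equivcond}(i) yields that $\tilde{\alpha}$ is a Hamburger moment sequence.

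As for obstacles, the argument is essentially free once Theorem \ref{thm:mainSM} is in hand; the one genuine point to watch is that nonnegativity of $p$ must now be required on the whole of $\mathbb{R}^d$, precisely because the representing measure of a Hamburger sequence need not be supported in the positive orthant. This is the sole place where the weaker hypothesis of the corollary (and the correspondingly weaker, Hamburger rather than Stieltjes, conclusion) enters. A minor technical caveat, handled as in the parent proof, is the integrability needed to justify the interchange of sum and integral, but no new difficulty arises beyond what is already implicit there.
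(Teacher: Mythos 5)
Your proposal is correct and matches the paper exactly: the paper gives no separate argument, stating only that ``the proof of Theorem \ref{thm:mainSM} also implies'' the corollary, which is precisely your plan of rerunning the unshifted quadratic-form identity with the integrals over $\mathbb{R}$ and invoking nonnegativity of $p$ on all of $\mathbb{R}^d$. Your observation that the shifted computation is dropped in the Hamburger setting is the right (and only) adjustment.
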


The next examples illustrate how Theorem \ref{thm:mainSM} works.

\begin{example} \label{exa:2.2}
Suppose $\alpha=(\alpha_n)_{n\in \N_0}$ is a Stieltjes moment sequence.

\begin{enumerate}[(i)]
\item  The polynomial $p(x,y)=\frac{1}{2} (x^r - y^r)(x^s - y^s)$ is nonnegative on   $[0, \infty)^2$ for fixed $r,s\in \nn$ and one can easily  check that
\begin{equation}\label{eq:a22}
  T_p(\ap)=\left( \det
\begin{bmatrix}
\alpha_n & \alpha_{n+r} \\
\alpha_{n+s} & \alpha_{n+r+s}
\end{bmatrix}
 \right)_{n\in \N_0}.
\end{equation}
    It follows from Theorem \ref{thm:mainSM} that $T_p$ has the Stieltjes moment property; a very simple verification of Theorem \ref{thm:L2} is completed just now.

\item Obviously, the polynomial $ p(x,y,z)= \frac{1}{6}(x-y)^2 (y-z)^2  (z-x)^2$ is nonnegative on $[0,\infty)^3$ and a calculation shows $p$ corresponds to the new sequence
\begin{equation}\label{eq:33}
T_p(\alpha)=\left( \det
\begin{bmatrix}
\alpha_n & \alpha_{n+1} & \alpha_{n+2} \\
\alpha_{n+1} & \alpha_{n+2} & \alpha_{n+3} \\
\alpha_{n+2} & \alpha_{n+3} & \alpha_{n+4}
\end{bmatrix}
 \right)_{n\in \N_0}.
\end{equation}
Applying Theorem \ref{thm:mainSM}, we know that $T_p(\alpha)$ is a Stieltjes moment sequence.

\item Through the same discussion, it can be seen that all new sequences corresponding to the following cases become  Stieltjes moment sequences:
\begin{align*}
T_{p_1} (\ap)  & = \left(\ap_{n+2} \ap_{n+4} - \ap_{n+2}^2 + \ap_{n}^2\right)_{n \in \N_0},\\
T_{p_2} (\ap)& =  \left(\frac{(-1)^{r}}{2} \binom{2r}{r} \ap_{n+r}^2+  \sum_{k=0}^{r-1} (-1)^{k} \binom{2r}{k} \ap_{n+k}\ap_{n+2r-k}\right)_{n \in \N_0},\\
T_{p_3} (\ap)& = \left(\ap_{n}^2 \ap_{n+3} - \ap_{n+1}^3\right)_{n \in \N_0},
\end{align*}
where $p_1(x,y)= {\frac{1}{2} (x^4y^2 + x^2y^4 - 2x^2y^2 + 2)}$,  $p_2(x,y)={\frac{1}{2}(x-y)^{2r}}$, and $p_3(x,y,z)={\frac{1}{3} (x^3+y^3 + z^3 -3xyz)}$.
\end{enumerate}
\end{example}

Note that it is highly nontrivial to show if the sequences in Example \ref{exa:2.2} are Hamburger (or Stieltjes) moment sequences; showing positivity of the Hankel matrix $H_m(\alpha)$ for all nonnegative integer $m$ is not simple.

Notice that most preceding polynomials so far are homogeneous but this is not always the case like  $p_1$ in Example \ref{exa:2.2} (i).
We will later define a sort of an inverse operator of  Definition \ref{def-ptos};  a class of new sequences will be considered and its members are supposed to correspond to a unique homogeneous polynomial.

Next, we briefly discuss a new moment sequence generated from a quadratic form.
Recall that a $d\times d$ symmetric real matrix $A$ is \emph{copositive} (respectively, \textit{positive semidefinite}) if
${\pmb x}^T A {\pmb x} \geq 0$ for all vectors ${\pmb x}\in [0,\infty)^d$ (respectively,  ${\pmb x}\in \mathbb{R}^d$).
Then it is trivial that if $A$ is copositive (respectively, positive semidefinite), then $p({\pmb x}):={\pmb x}^T A {\pmb  x}\in \mathbb{R}[{\pmb x}]$ is nonnegative on $[0,\infty)^d$ (respectively, $\mathbb{R}^d$).
Hence, from Theorem \ref{thm:mainSM} and Corollary \ref{coro:HM} we get the following.

\begin{corollary}\label{corollary:copositive-new}
Let $A=[a_{ij}]_{i,j=1}^d$ be a $d\times d$ symmetric copositive (respectively, positive semidefinite) matrix and
$\tilde{\alpha}=( \tilde{\alpha}_n)_{n\in \N_0}$ be a sequence defined as
\begin{equation*}\label{eq:copositive-new}
 \tilde{\alpha}_n = \sum_{i=1}^d a_{ii} \alpha_{n+2} \alpha_{n}^{d-1} + \sum_{i\neq j} a_{ij} \alpha_{n+1}^2 \alpha_n^{d-2}.
 \end{equation*}
If $(\alpha_n)_{n\in \N_0}$ is a Stieltjes (respectively, Hamburger) moment sequence, then
so is $(\tilde{\alpha}_n)_{n\in \N_0}$.
\end{corollary}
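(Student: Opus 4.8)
The plan is to recognize $\tilde{\alpha}$ as the image of $\alpha$ under the operator $T_p$ attached to the quadratic form $p(\pmb{x}) := \pmb{x}^T A \pmb{x}$, and then to quote the moment properties of $T_p$ already established. First I would set
$p(\pmb{x}) = \pmb{x}^T A \pmb{x} = \sum_{i,j=1}^d a_{ij} x_i x_j \in \mathbb{R}[\pmb{x}]$.
As noted in the paragraph preceding the statement, copositivity of $A$ gives $p|_{[0,\infty)^d} \geq 0$, while positive semidefiniteness gives the stronger $p|_{\mathbb{R}^d} \geq 0$.

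Next I would compute $T_p(\alpha)$ directly from Definition \ref{def-ptos} and check that it equals $\tilde{\alpha}$. The step that carries the content is reading off the monomial coefficients of $p$ together with their multi-indices: the monomial $x_k^2$ has coefficient $a_{kk}$ and multi-index $2\pmb{e}_k$, contributing $a_{kk}\alpha_{n+2}\alpha_n^{d-1}$ to the $n$-th term (a single factor shifted by two, the remaining $d-1$ factors unshifted); the monomial $x_i x_j$ with $i<j$ has coefficient $a_{ij}+a_{ji}=2a_{ij}$ and multi-index $\pmb{e}_i+\pmb{e}_j$, contributing $2a_{ij}\alpha_{n+1}^2\alpha_n^{d-2}$. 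Summing the diagonal contributions over $k$ and the off-diagonal contributions over unordered pairs $i<j$, and using $\sum_{i<j}2a_{ij}=\sum_{i\neq j}a_{ij}$, reproduces exactly the defining formula for $\tilde{\alpha}_n$. Hence $T_p(\alpha)=\tilde{\alpha}$.

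With this identification the conclusion is immediate. In the Stieltjes case $p|_{[0,\infty)^d}\geq 0$, so Theorem \ref{thm:mainSM} shows $T_p$ has the Stieltjes moment property and therefore $\tilde{\alpha}=T_p(\alpha)$ is a Stieltjes moment sequence; in the Hamburger case $p|_{\mathbb{R}^d}\geq 0$, so Corollary \ref{coro:HM} shows $T_p$ has the Hamburger moment property and $\tilde{\alpha}$ is a Hamburger moment sequence. Since the whole argument reduces to invoking results already in hand, there is no substantive obstacle; the only point deserving care is the bookkeeping in the second step, and in particular the factor of two coming from the fact that each symmetric pair $a_{ij}=a_{ji}$ occurs twice in $\sum_{i,j}a_{ij}x_ix_j$, which is precisely what turns the enumeration over unordered pairs into the sum $\sum_{i\neq j}$ written in the statement. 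Once this matching is confirmed, the corollary follows directly from Theorem \ref{thm:mainSM} and Corollary \ref{coro:HM}.
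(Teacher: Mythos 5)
Your proposal is correct and follows exactly the paper's route: the paper likewise sets $p(\pmb{x})=\pmb{x}^T A\pmb{x}$, observes that copositivity (resp.\ positive semidefiniteness) gives nonnegativity on $[0,\infty)^d$ (resp.\ $\mathbb{R}^d$), and invokes Theorem \ref{thm:mainSM} and Corollary \ref{coro:HM}. Your explicit bookkeeping verifying $T_p(\alpha)=\tilde{\alpha}$, including the factor of two from the symmetric off-diagonal entries, is accurate and merely spells out what the paper leaves implicit.
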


\begin{example}\label{example:2by2det}
One can show that  $A=\left[
         \begin{array}{cc}
           1/2 & -1/2 \\
          -1/2 & 1/2 \\
         \end{array}
       \right]$
is a copositive (respectively, positive semidefinite) matrix. Then
Corollary \ref{corollary:copositive-new} implies that
if $(\alpha_n)_{n\in \N_0}$ is a Stieltjes (respectively, Hamburger) moment sequence, then
so is
\begin{equation}\label{eq:22}
\left( \det
\begin{bmatrix}
\alpha_n & \alpha_{n+1}\\
\alpha_{n+1} & \alpha_{n+2}
\end{bmatrix}
\right)_{n\in \N_0}.
\end{equation}
\end{example}

Wang and Zhu (\cite{Zhu16}) proved that \eqref{eq:22} is a Stieltjes moment sequence for any Stieltjes moment sequence $(\alpha_n)_{n\in\N_0}$, based on the positivity of compound matrix of the Hankel matrix in \eqref{eq:Hankelmatrix}.
We simply showed that it is a Stieltjes moment sequence in a unified approach.
Moreover, \eqref{eq:a22} and \eqref{eq:33} can be considered as extensions of \eqref{eq:22}.
Now we show more generalized case; all new sequences generated from any minors of the Hankel matrix $H_m (\ap)$ are to have the Stieltjes moment property.

\begin{theorem}\label{thm:Ln}
Let $0<r_1<r_2<\cdots<r_{d-1}$ and $0<t_1<t_2<\cdots<t_{d-1}$ be natural numbers in a given order. If $\alpha=(\alpha_n)_{n\in \N_0}$ is a Stieltjes moment sequence, then so is
\begin{equation}\label{ddd}
\left(\det \left[
         \begin{array}{cccc}
           \alpha_{n} & \alpha_{n+r_1} & \cdots & \alpha_{n+r_{d-1}} \\
           \alpha_{n+t_1} & \alpha_{n+r_1+t_1} & \cdots & \alpha_{n+r_{d-1}+t_1} \\
           \vdots & \vdots & \ddots & \vdots \\
           \alpha_{n+t_{d-1}} & \alpha_{n+r_1+t_{d-1}} & \cdots & \alpha_{n+r_{d-1}+t_{d-1}} \\
         \end{array}
       \right ]\right)_{n\in \N_0}.
\end{equation}
\end{theorem}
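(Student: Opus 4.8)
The plan is to realize the determinant in \eqref{ddd} as $T_p(\alpha)$ for a single polynomial $p\in\mathbb{R}[x_1,\dots,x_d]$ and then appeal to Theorem~\ref{thm:mainSM}. Writing $r_0=t_0=0$, the $(i,j)$-entry of the matrix (with $i,j\in\{0,1,\dots,d-1\}$) is $\alpha_{n+r_j+t_i}$, so the Leibniz expansion gives
\[
\det[\,\cdots\,]=\sum_{\sigma\in S_d}\operatorname{sgn}(\sigma)\prod_{i=0}^{d-1}\alpha_{n+r_{\sigma(i)}+t_i}.
\]
Assigning the $i$-th row the variable $x_{i+1}$ and comparing with Definition~\ref{def-ptos}, this is precisely $T_p(\alpha)_n$ for
\[
p(\pmb x)=\sum_{\sigma\in S_d}\operatorname{sgn}(\sigma)\prod_{i=0}^{d-1}x_{i+1}^{\,r_{\sigma(i)}+t_i}
=\Big(\prod_{i=0}^{d-1}x_{i+1}^{\,t_i}\Big)\,V_r(\pmb x),
\]
where $V_r(\pmb x):=\det\big[x_{i+1}^{\,r_j}\big]_{i,j=0}^{d-1}$ is the generalized Vandermonde determinant with exponents $0=r_0<\cdots<r_{d-1}$; let $V_t$ denote the analogous determinant built from the $t_j$.

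Second, I would reduce $p$ to a symmetric polynomial without changing $T_p$. Since the products $\alpha_{n+i_1}\cdots\alpha_{n+i_d}$ in Definition~\ref{def-ptos} are invariant under permutations of $(i_1,\dots,i_d)$, the operator $T_p$ depends only on the full symmetrization $\overline p(\pmb x)=\frac1{d!}\sum_{\tau\in S_d}p(x_{\tau(1)},\dots,x_{\tau(d)})$; that is, $T_p=T_{\overline p}$. Because $V_r$ is alternating ($V_r(\tau\pmb x)=\operatorname{sgn}(\tau)V_r(\pmb x)$, as permuting variables permutes rows), a direct computation collapses the symmetrization of the monomial factor into a second Vandermonde, yielding
\[
\overline p(\pmb x)=\frac1{d!}\,V_r(\pmb x)\,V_t(\pmb x).
\]
By Theorem~\ref{thm:mainSM} it then suffices to show that $V_rV_t\ge0$ on $[0,\infty)^d$.

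The crux is this last nonnegativity, and it is where I expect the real work. The product $V_rV_t$ is symmetric, being a product of two alternating polynomials, so its value at any $\pmb x\in[0,\infty)^d$ equals its value at the sorted point $0\le x_{(1)}\le\cdots\le x_{(d)}$; hence it is enough to treat the ordered region. There I would invoke the classical fact that the powers $x^{r_0},\dots,x^{r_{d-1}}$ with $0\le r_0<\cdots<r_{d-1}$ form a Chebyshev system on $(0,\infty)$ --- equivalently, that the kernel $(x,e)\mapsto x^{e}$ is totally positive, or that $V_r$ factors as a Schur polynomial (with nonnegative coefficients) times the ordinary Vandermonde $\prod_{i<j}(x_j-x_i)$. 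Either formulation shows $V_r>0$ on $\{0<x_1<\cdots<x_d\}$, and likewise $V_t>0$ there. Consequently $V_rV_t>0$ on the open ordered region, $\ge0$ on its closure, and by the symmetry just noted $\ge0$ on all of $[0,\infty)^d$.

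Finally, combining the two displays gives $\overline p=\frac1{d!}V_rV_t\ge0$ on $[0,\infty)^d$. Theorem~\ref{thm:mainSM} then guarantees that $T_{\overline p}=T_p$ has the Stieltjes moment property, so applying it to the Stieltjes moment sequence $\alpha$ shows that the sequence \eqref{ddd} is again a Stieltjes moment sequence. (Alternatively, one could expand each entry as $\int x^{n+r_j+t_i}\,d\mu$ and apply Andr\'eief's identity to write the $n$-th term directly as $\frac1{d!}\int\cdots\int V_r V_t\,d\nu\cdots d\nu$ with $d\nu=x^n\,d\mu$, recovering term-by-term nonnegativity; but the route through Theorem~\ref{thm:mainSM} is what delivers the full moment property.)
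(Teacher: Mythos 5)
Your proposal is correct and follows essentially the same route as the paper: realize the determinant as $T_p(\alpha)$ with $p=\bigl(\prod_i x_{i+1}^{t_i}\bigr)V_r$, symmetrize to get $\overline{p}=\frac{1}{d!}V_rV_t$, and deduce nonnegativity on $[0,\infty)^d$ from the positivity of generalized Vandermonde determinants at increasingly ordered positive arguments (the paper cites Yang--Wu--Zhang for this fact, where you invoke the equivalent Chebyshev-system/Schur-polynomial formulation). The only cosmetic difference is that the paper routes the final step through Corollary~\ref{coro:pos} rather than directly through Theorem~\ref{thm:mainSM}, which is the same thing.
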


To prove the theorem, we need to know some properties of symmetric positive polynomials. The proof will be shown in the next section.

\section{Symmetric positive polynomials}

In previous section, we can see that for a given Stieltjes moment sequence and a positive polynomial, the associated sequence is also a Stieltjes moment sequence. From the opposite point of view, the following question arises naturally;
\begin{center}
\textit{for any given $\beta \in\mathcal{A}$, are there $p \in \R[\pmb x]$ and $\alpha\in\mathcal{A}$ such that $T_p(\alpha)=\beta$?}
\end{center}
If so, we can know that $\beta$ is a Stieltjes moment sequence whenever $\alpha$ is a Stieltjes moment sequence and $p$ is nonnegative on $[0,\infty)^d$.
In fact, we do not know the existence of $\alpha$ and $p$, but once they exist, there can be infinitely many such polynomials (not necessarily symmetric or positive). For example, let $\beta=(\beta_n)_{n\in\N_0}$ be a real sequence with $\beta_n=\alpha_n\alpha_{n+2}-\alpha_{n+1}^2$ ($n\in\N_0$) for some Stieltjes moment sequence $\alpha=(\alpha_n)_{n\in\N_0}$. Then $T_p(\alpha)=\beta$, where
 $$p_\theta(x,y)=\frac{1-\theta}{2}x^2-xy+\frac{1+\theta}{2}y^2 \ \ \text{for all} \ \ \theta \in \R.$$
That is, we can see that there are infinitely many polynomials corresponding to the new sequence. But for $\theta\neq0$, $p_\theta$ is not nonnegative on $[0,\infty)^2$, so Theorem \ref{thm:mainSM} cannot be applied for this case. However, since $p_0(x,y)$ is nonnegative on $[0,\infty)^2$, $T_{p_\theta}(\alpha)$ is a Stieltjes moment sequence. We also observe that $p_0(x,y)=\frac{1}{2}(x-y)^2$ is symmetric (see the definition below). Therefore, if we want to know that a given sequence $\beta$ is moment sequence, we need to find a corresponding positive polynomial, especially, a symmetric positive polynomial.
 In this section we study symmetric positive polynomials and prove Theorem \ref{thm:Ln} by properties of symmetric positive polynomials.

Denote $\mathcal{S}$ for the set of all permutations on $\{1,\ldots,d\}$. A multivariate polynomial $p(\pmb x) \in \R[\pmb x]$ is called \textit{symmetric} if
$$p(x_1,\ldots,x_d)=p(x_{\pi(1)},\ldots,x_{\pi(d)}) \ \ \text{for all} \ \ \pi\in \mathcal{S}.$$




For any given real sequence $\beta$, if we  know there are $\alpha$ and $p$ such that $T_p(\alpha)=\beta$, we have questions:
\begin{enumerate}[(i)]
  \item Can we find such $p$ and $\alpha$?
  \item Is there a symmetric polynomial $q$ such that $T_p(\alpha)=\beta=T_q(\alpha)$?
  \item If so, is such a symmetric polynomial unique?
  \item If so, how can we obtain such a symmetric polynomial?
\end{enumerate}
For the question (i), it is difficult to suggest in general how to obtain such $p$ and $\alpha$.
For questions (ii) and (iii), the answers are yes.
We will provide an answer to the question (iv) below.

\begin{definition}
 For a given polynomial $p \in \mathbb{R}[\pmb x]$
define the polynomial $\overline{p} \in \mathbb{R}[\pmb x]$ by
  \begin{equation*}
    \overline{p}(x_1,\ldots,x_d)=\frac{1}{d!}\sum_{\pi\in \mathcal{S}}p(x_{\pi(1)},\ldots,x_{\pi(d)}).
  \end{equation*}
\end{definition}

Then we have some properties of $\overline p$, which are easy to show, so their proofs are omitted.

\begin{proposition}\label{thm:pos}
  Let $p,q \in \mathbb{R}[\pmb x]$ and $K$ be a closed subset of $\R$.
  Then the followings hold on $K^d$:
  \begin{enumerate}[(i)]
  \item $\overline{p}$ is symmetric.
  \item $\overline{p+q} = \overline{p}+\overline{q}$.
  \item $\overline{cp} = c\overline{p}$, where $c$ is constant.
  \item $\overline{p+c} = \overline{p}+c$, where $c$ is constant.
  \item If $p$ is symmetric, then $p=\overline{p}$.
  \item If $p$ is nonnegative, then so is $\overline{p}$.
  \item $T_p(\alpha)=T_{\overline{p}}(\alpha)$ for any sequence $\ap \in \mathcal{A}$.
  \item If $T_p(\alpha)=T_q(\alpha)$, then $\overline{p}=\overline{q}$ for any sequence $\ap \in \mathcal{A}$.
  \end{enumerate}
\end{proposition}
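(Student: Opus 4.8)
The plan is to view the symmetrization $p\mapsto\overline p$ as the averaging projection for the action of the permutation group $\mathcal{S}$ on $\mathbb{R}[\pmb x]$, and to exploit that $T_p$ sees a monomial only through the multiset of its exponents. Throughout I would write $p^{\pi}(\pmb x):=p(x_{\pi(1)},\ldots,x_{\pi(d)})$ for $\pi\in\mathcal{S}$, so that $\overline p=\frac{1}{d!}\sum_{\pi\in\mathcal{S}}p^{\pi}$, and a direct substitution gives the composition identity $(p^{\pi})^{\sigma}=p^{\sigma\pi}$.

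Parts (i)--(vi) then reduce to routine bookkeeping. For (i), $(\overline p)^{\sigma}=\frac{1}{d!}\sum_{\pi}p^{\sigma\pi}=\overline p$ after reindexing $\pi\mapsto\sigma\pi$. Parts (ii)--(iv) are immediate from linearity of $p\mapsto p^{\pi}$ together with the observation that a constant is fixed by every $\pi$, so $\overline c=c$. Part (v) holds because $p^{\pi}=p$ for all $\pi$ when $p$ is symmetric, whence $\overline p=\frac{1}{d!}(d!\,p)=p$. Part (vi) follows since $K^{d}$ is stable under permutation of coordinates, so each summand $p^{\pi}$ is nonnegative on $K^{d}$ and hence so is their average.

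The substance lies in (vii) and (viii). For (vii), the key point is that for every multi-index $\pmb i$ the product $\alpha_{n+i_1}\cdots\alpha_{n+i_d}$ is invariant under permutations of $(i_1,\ldots,i_d)$ because multiplication is commutative; writing out $T_{p^{\pi}}(\alpha)_n$ and relabelling the summation index accordingly yields $T_{p^{\pi}}(\alpha)=T_p(\alpha)$ for every $\pi\in\mathcal{S}$. Since $\tilde\alpha_n$ is linear in the coefficients of $p$, averaging over $\pi$ gives $T_{\overline p}(\alpha)=\frac{1}{d!}\sum_{\pi}T_{p^{\pi}}(\alpha)=T_p(\alpha)$.

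The main obstacle is (viii), which is exactly the injectivity of $T$ on symmetric polynomials. I would put $s:=\overline p-\overline q$, which by (ii)--(iii) equals $\overline{p-q}$ and by (i) is symmetric; by (vii) and linearity, $T_s(\alpha)=T_{p-q}(\alpha)=T_p(\alpha)-T_q(\alpha)=0$ for all $\alpha\in\mathcal{A}$. It then suffices to show that a symmetric $s$ with $T_s(\alpha)=0$ for every $\alpha$ must vanish. I would use only the single identity $T_s(\alpha)_0=\sum_{\pmb i}s_{\pmb i}\,\alpha_{i_1}\cdots\alpha_{i_d}=0$ and treat the finitely many entries $\alpha_0,\alpha_1,\ldots$ that occur as independent real indeterminates: a real polynomial vanishing for all real values is the zero polynomial, so every collected coefficient is $0$. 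Since $\alpha_{i_1}\cdots\alpha_{i_d}$ depends only on the multiset $\{i_1,\ldots,i_d\}$, the coefficient attached to a fixed multiset is $\sum s_{\pmb i}$ over the multi-indices $\pmb i$ in the corresponding permutation orbit; symmetry of $s$ makes $s_{\pmb i}$ constant along that orbit, so this coefficient equals a positive integer (the orbit size) times the common value, forcing the value to be $0$. As this holds for every orbit, $s=0$, hence $\overline p=\overline q$. The only points needing care are the passage from ``vanishing on all sequences'' to ``vanishing as a polynomial identity'' and the orbit count that keeps the multiplier nonzero.
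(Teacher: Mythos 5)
Your proof is correct and complete. There is nothing in the paper to compare it against: the authors state that all eight parts ``are easy to show, so their proofs are omitted,'' so your write-up simply supplies the missing details. The only part that genuinely requires an argument is (viii), and your route---reducing to a symmetric $s=\overline{p-q}$ with $T_s(\alpha)=0$ for all $\alpha$, reading $T_s(\alpha)_0$ as a polynomial identity in finitely many independent entries of $\alpha$, and using the fact that the collected coefficient of each monomial is the orbit size times the common value $s_{\pmb i}$---is sound; your implicit reading of the statement, with ``for all $\alpha\in\mathcal{A}$'' quantifying the hypothesis rather than the conclusion, is the only one under which (viii) can hold, since a single sequence such as $\alpha=0$ does not determine $\overline{p}$.
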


For a given real sequence $\beta$, we suppose that there are a sequence $\alpha$ and a polynomial $p$ such that $T_p(\alpha)=\beta$. Then it follows from Proposition \ref{thm:pos} that although $\beta$ can have infinitely many such polynomials, such a symmetric polynomial must be unique.
So even though many polynomials are not nonnegative, a symmetric polynomial $\overline{p}$ can possibly be nonnegative.
For an example, consider a polynomial $p(x,y)=3x^2-y^2$, which is not nonnegative on $\mathbb{R}^2$, but its corresponding symmetric polynomial $\overline{p}(x,y)=x^2+y^2$ is nonnegative on $\mathbb{R}^2$.
We also have another example appeared at the beginning of this section; $\overline{p_\theta}=p_0$ for all $\theta\in\R$.

We now have a criterion; by checking the positivity of symmetric polynomial, we can know that a associated sequence is a Stieltjes moment sequence or not.
Incidentally, if a corresponding polynomial is not nonnegative, it is inconclusive, that is, $T_p$ can possibly have the Stieltjes moment property. Let  $\ap_n = 2^n$ and consider  $\tilde\ap_n = \ap_n  \ap_{n+2} - t   \ap_{n}\ap_{n+1} = 2^{2n+1}(2-t)$ for $t\in\re$. If $0<t<2$, then $\tilde \ap_n$ is a Stieltjes moment sequence.  We next find a symmetric polynomial $\overline{p}=\frac{1}{2}(x^2 + y^2- t x-t y)$ and observe that this polynomial is not positive on $[0,\infty)^2$ for $t>0$.

\begin{corollary}\label{coro:pos}
 Let $p\in \mathbb{R}[\pmb x]$. If $\overline{p}$ is nonnegative on $[0,\infty)^d$, then $T_p$ has Stieltjes moment property.
\end{corollary}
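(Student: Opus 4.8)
The plan is to reduce the statement directly to the already-established results by exploiting the symmetrization identity, so that essentially no new work is required. First I would invoke Proposition \ref{thm:pos}(vii), which asserts that $T_p(\alpha)=T_{\overline{p}}(\alpha)$ for every $\alpha\in\mathcal{A}$. Since these two operators agree on all real sequences, $T_p$ has the Stieltjes moment property if and only if $T_{\overline{p}}$ does; thus it suffices to analyze $T_{\overline{p}}$.

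Next, the hypothesis is precisely that $\overline{p}$ is nonnegative on $[0,\infty)^d$. For $d\geq 2$ I would apply Theorem \ref{thm:mainSM} to the polynomial $\overline{p}$, which yields immediately that $T_{\overline{p}}$ has the Stieltjes moment property. For the remaining case $d=1$, a one-variable polynomial is automatically symmetric, so $\overline{p}=p$ by Proposition \ref{thm:pos}(v), and the conclusion follows from Theorem \ref{thm:mainSM1d}. Combining these observations, for any Stieltjes moment sequence $\alpha$ the sequence $T_p(\alpha)=T_{\overline{p}}(\alpha)$ is again a Stieltjes moment sequence, which is exactly the claim.

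I do not anticipate a genuine obstacle; the corollary is a formal consequence of the symmetrization machinery of Proposition \ref{thm:pos} together with Theorem \ref{thm:mainSM}. The only point worth emphasizing is that the nonnegativity hypothesis is imposed on $\overline{p}$ rather than on $p$ itself, and it is this relaxation that makes the corollary strictly stronger than Theorem \ref{thm:mainSM}: the original polynomial $p$ need not be nonnegative on $[0,\infty)^d$ provided its symmetrization is, as illustrated by the earlier example $p(x,y)=3x^2-y^2$ whose symmetrization $\overline{p}(x,y)=x^2+y^2$ is nonnegative.
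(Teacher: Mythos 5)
Your proof is correct and is exactly the argument the paper intends (the paper leaves it implicit as an immediate consequence): combine Proposition~\ref{thm:pos}(vii), which gives $T_p(\alpha)=T_{\overline{p}}(\alpha)$, with Theorem~\ref{thm:mainSM} (or Theorem~\ref{thm:mainSM1d} when $d=1$) applied to $\overline{p}$. Your closing remark about why the hypothesis on $\overline{p}$ rather than $p$ is the whole point also matches the paper's surrounding discussion.
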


Here, using the properties for symmetric positive polynomials, we prove Theorem \ref{thm:Ln} which is the result of generalizing Theorem \ref{thm:L2}.

\begin{proof}[Proof of Theorem \ref{thm:Ln}]
  Let $\tilde{\alpha}$ be a sequence as in \eqref{ddd} and let
  \begin{equation*}
    p(x_1,\ldots,x_d):=\det \left[
         \begin{array}{cccc}
           1 & x_1^{r_1} & \cdots & x_1^{r_{d-1}} \\
           x_2^{t_1} & x_2^{r_1+t_1} & \cdots & x_2^{r_{d-1}+t_1} \\
           \vdots & \vdots & \ddots & \vdots \\
           x_d^{t_{d-1}} & x_d^{r_1+t_{d-1}} & \cdots & x_d^{r_{d-1}+t_{n-1}} \\
         \end{array}
       \right ].
  \end{equation*}
  Then we have $T_p(\alpha)=\tilde{\alpha}$. For any finite sequence $\beta=(b_n)_{n=1}^{d-1}$, denote the generalized Vandermonde determinant by
  \begin{equation*}
    D_\beta(x_1,\ldots,x_d):= \det \left[
         \begin{array}{cccc}
           1 & x_1^{b_1} & \cdots & x_1^{b_{d-1}} \\
           1 & x_2^{b_1} & \cdots & x_2^{b_{d-1}} \\
           \vdots & \vdots & \ddots & \vdots \\
           1 & x_d^{b_1} & \cdots & x_d^{b_{d-1}} \\
         \end{array}
       \right ].
  \end{equation*}
Then we have
  \begin{equation*}
    p(x_1,\ldots,x_d) = \prod_{i=1}^{d}x_i^{t_{i-1}} D_\rho(x_1,\ldots,x_d),
  \end{equation*}
  where $t_0=0$ and $\rho:=(r_n)_{n=1}^{d-1}$, it follows that
  \begin{align*}
    \overline{p}(x_1,\ldots,x_d) &= \frac{1}{d!} \sum_{\sigma \in \mathcal{S}} \prod_{i=1}^{d}x_{\sigma(i)}^{t_{i-1}} D_\rho(x_{\sigma(1)},\ldots,x_{\sigma(d)})\\
    &= \frac{1}{d!} \sum_{\sigma \in \mathcal{S}} \prod_{i=1}^{d}x_{\sigma(i)}^{t_{i-1}} \cdot \text{sgn}(\sigma) \cdot D_\rho(x_1,\ldots,x_d)\\
    &= \frac{1}{d!} D_\rho(x_1,\ldots,x_d) \cdot \det \left[ x_j^{t_{i-1}} \right]_{i,j=1}^d \\
    &= \frac{1}{d!} D_\rho(x_1,\ldots,x_d) \cdot D_\tau(x_1,\ldots,x_d),
  \end{align*}
   where $\tau:=(t_n)_{n=1}^{d-1}$. If $x_i=x_j$ for some $i\neq j$ or $x_k=0$ for some $k$, then $\overline{p}(x_1,\ldots,x_d)=0$. Let $x_1,\ldots,x_d$ be any mutually distinct positive real numbers. Then there is a permutation $\pi \in \mathcal{S}$ such that $x_{\pi(i)}<x_{\pi(i+1)}$ for $i=1,\ldots,d-1$. By some determinant properties, we have
   \begin{equation*}
     D_\rho(x_1,\ldots,x_d) \cdot D_\tau(x_1,\ldots,x_d)=D_\rho(x_{\pi(1)},\ldots,x_{\pi(d)}) \cdot D_\tau(x_{\pi(1)},\ldots,x_{\pi(d)}).
   \end{equation*}
It follows from \cite[Theorem 1]{YWZ}, $D_\rho(x_{\pi(1)},\cdots,x_{\pi(d)})$ and $D_\tau(x_{\pi(1)},\cdots,x_{\pi(d)})$ are positive. Hence, $\overline{p}$ is nonnegative on $[0,\infty)^d$. By Corollary \ref{coro:pos}, $T_p$ has the Stieltjes moment property. That is, $\tilde{\alpha}$ is a Stieltjes moment sequence.
\end{proof}

A new moment sequence $\tilde \ap$ must  have a unique corresponding symmetric polynomial; if it is nonnegative, then  $\tilde \ap$ has the Stieltjes and Hamburger moment properties.
A criterion that can determine which  symmetric polynomials are positive is essential.
That is, it is necessary to confirm that the minimum  of a symmetric polynomial is nonnegative. The first approach might be to solve a system of symmetric polynomial equations of first derivatives to find the critical point of a given symmetric polynomial. The second  approach is to directly check  whether a symmetric polynomial belongs to the cone of positive polynomials.
Recall that there are significant many polynomials that are generally positive, but not expressed as sum of squares.
In any case, it is difficult to use these two methods in practice  and one should be familiar with significant  background knowledge of algebraic geometry (see \cite{BlRi21, Las10}).
We instead would like to  introduce a relatively intuitive  result which is the so-called half-order principle.
For $k \in \nn$, let $A_k$ be the set of all points in $\re^d$ with at most $k$ distinct components, that is,
$$
A_k := \q {\pmb x}\equiv (x_1, \ldots, x_d) \in \re^d : | \q x_1, \ldots, x_d \w |\le k  \w.
$$

\begin{proposition}[\cite{BlRi21}, Half degree principle]\label{hdp}
Let $p$ be a symmetric homogeneous polynomial  of degree $2m$ in $d$ variables.
and set $k:=\max \q 2, \, m\w$. Then $f$ is nonnegative if and only if
$p(y)\ge 0 \text{ for all }y\in A_k.$
\end{proposition}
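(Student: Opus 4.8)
The plan is to prove the two implications separately, the forward one being immediate and the reverse one carrying all the content. Since $A_k\subseteq\mathbb{R}^d$, if $p\ge 0$ on all of $\mathbb{R}^d$ then trivially $p\ge 0$ on $A_k$, so only the converse needs work. For the converse I fix an arbitrary point $x_0\in\mathbb{R}^d$ and aim to show $p(x_0)\ge 0$. Writing the power sums $s_j(x):=\sum_{\ell=1}^d x_\ell^{\,j}$, I would freeze the low-order power sums at their values $c_i:=s_i(x_0)$ for $i=1,\dots,k$ and work on the fiber $W_c:=\{x:\ s_i(x)=c_i,\ i=1,\dots,k\}$. Because $k=\max\{2,m\}\ge 2$, the constraint $s_2=c_2$ confines $W_c$ to a sphere, so $W_c$ is compact and nonempty (it contains $x_0$); hence $p$ attains its minimum there. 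Since $x_0\in W_c$ we have $p(x_0)\ge \min_{W_c}p$, so it suffices to show that this minimum is attained at a point of $A_k$, where $p\ge 0$ by hypothesis. This is exactly the role of the constant $2$ in $k=\max\{2,m\}$: it guarantees compactness.

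The first real step is a normal form obtained by expressing $p$ as a polynomial in $s_1,\dots,s_{2m}$. The homogeneity constraint $\deg p=2m$ forces every power sum of index $j>m$ to appear at most linearly and never as a product of two high-index power sums (any such product or square would have weighted degree exceeding $2m$). Consequently $p$ can be written as
\[
p \;=\; g_0(s_1,\dots,s_k)\;+\;\sum_{j=k+1}^{2m} g_j(s_1,\dots,s_k)\,s_j ,\qquad k=\max\{2,m\},
\]
where each $g_j$ is a polynomial in the low-index power sums alone (the sum is empty when $m\le 1$). The payoff of this form is that once $s_1=c_1,\dots,s_k=c_k$ are frozen, the functions $g_0,g_{k+1},\dots,g_{2m}$ become constants, so $p$ restricts on $W_c$ to an \emph{affine-linear} function of the high-order power sums $(s_{k+1},\dots,s_{2m})$. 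The whole problem thus collapses to minimizing a single fixed linear functional of $(s_{k+1},\dots,s_{2m})$ over the compact fiber $W_c$ and showing the minimum occurs at a point with at most $k$ distinct coordinates.

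To analyze this minimum I would combine two ingredients. A first-order Lagrange-multiplier analysis on $W_c$ shows that at a smooth minimizer $x^{*}$ one has $\nabla p=\sum_{i=1}^{k}\mu_i\,\nabla s_i$; expanding $\partial_\ell p=\sum_{j}A_j\, j\,(x_\ell^{*})^{j-1}$ shows each coordinate $x_\ell^{*}$ is a root of one univariate polynomial of degree at most $2m-1$, so a minimizer has at most $2m-1$ distinct values (colliding coordinates only lower this count). This already yields the cruder degree principle but \emph{not} the sharp bound, because the top term forces degree $2m-1$ regardless of the linear structure. To reach $k=\max\{2,m\}$ I would pass to the moment-space picture: as $x$ ranges over $W_c$, the vector $(s_{k+1}(x),\dots,s_{2m}(x))$ sweeps out a convex moment body, and the minimum of our linear functional is attained at an \emph{extreme point} of it. By the Krein–Markov / Tchebycheff-system theory of moment bodies — the same phenomenon by which Gaussian quadrature matches $2\ell$ moments with $\ell$ nodes — the extreme points are the principal representations, supported on at most $\lfloor m/2\rfloor+1\le k$ atoms, i.e.\ they correspond to points with at most $k$ distinct coordinates and hence lie in $A_k$. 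This would give $\min_{W_c}p\ge 0$ and therefore $p(x_0)\ge 0$.

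The hard part will be this last, sharp count, and in particular reconciling the moment-theoretic extremal measures (which carry arbitrary nonnegative real weights) with the fact that a genuine point of $\mathbb{R}^d$ corresponds to a measure whose atoms have integer multiplicities summing to $d$. Extracting the factor-of-two improvement over the naive degree bound while respecting this integrality is the delicate core of the argument: it is exactly where the convexity of the moment body and the structure of Tchebycheff systems must be used in an essential way, since the single first-order condition only ever sees the degree $2m-1$. It is perhaps worth noting that this moment-body/Hankel-positivity structure is precisely the circle of ideas underlying the rest of the paper, so the sharp count can be read as a finite-dimensional shadow of the moment-positivity theme developed above.
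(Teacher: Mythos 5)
First, a point of comparison: the paper offers no proof of this proposition at all --- it is quoted verbatim from \cite{BlRi21} (Timofte's half-degree principle, as treated by Riener and Blekherman--Riener) --- so there is no in-paper argument to measure your proposal against; it can only be judged against the known proofs. Your skeleton does match their strategy: the forward implication is trivial since $A_k\subset\mathbb{R}^d$; the normal form $p=g_0(s_1,\dots,s_k)+\sum_{j>k}g_j(s_1,\dots,s_k)\,s_j$ is correct (weighted-degree counting forbids any product of two power sums of index greater than $m$); and fixing $s_1,\dots,s_k$ at their values at an arbitrary $x_0$ yields a nonempty compact fiber $W_c$ on which $p$ is affine in the high power sums, reducing everything to showing that this affine function attains its minimum over $W_c$ at a point of $A_k$.

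That last reduction, however, \emph{is} the theorem, and your proposal does not prove it --- as you acknowledge. The two devices you offer both fall short. The Lagrange-multiplier computation only shows that the coordinates of a constrained critical point are roots of a single univariate polynomial of degree $2m-1$, which gives the (full) degree principle, not the half-degree bound $k=\max\{2,m\}$. The appeal to Krein--Markov/Tchebycheff extreme points concerns the convex body of moment vectors of \emph{arbitrary} nonnegative measures with $k$ prescribed moments, whereas the set actually swept out by $(s_{k+1}(x),\dots,s_{2m}(x))$ for $x\in W_c$ consists of moment vectors of atomic measures with \emph{integer} multiplicities summing to $d$; it is not convex, and the attained extreme points of its convex hull need not be principal representations of the relaxed problem. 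Identifying which points of $W_c$ realize them is exactly the ``delicate core'' you flag as open, and without it the argument establishes nothing beyond the cruder $2m-1$ count. For the record, the literature closes this gap without any measure relaxation: either by Timofte's ODE deformation, which moves a point inside $W_c$ toward fewer distinct coordinates while varying each high power sum monotonically, or by a local analysis at a constrained extremizer in the coordinates (distinct values, integer multiplicities) --- a generalized Vandermonde rank computation plus \emph{second-order} (Hessian) information about an auxiliary univariate polynomial whose critical points are the distinct coordinate values; it is this second-order input, absent from your proposal, that produces the halving of the degree.
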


Incidentally, we would like to discuss more about the optimization of the symmetric polynomial $\overline p$.

\begin{theorem}\label{thm:avef}
Let $p\in \mathbb{R}[\pmb x]$ and $K$ be a closed subset of $\R$.
  If $p$ has a minimum on $K^d$, then $\underset{{\pmb x}\in K^d}{\min}\, p \leq \underset{{\pmb x}\in K^d}{\min}\, \overline{p}$.
\end{theorem}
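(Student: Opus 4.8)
The plan is to use the only two facts that matter here: that $\overline p$ is by construction the average of the $d!$ coordinate-permuted copies of $p$, and that the domain $K^d$ is stable under permuting coordinates. The second fact is the workhorse, so I would record it first: since each of the $d$ coordinates ranges over the \emph{same} set $K$, for every $\pmb x=(x_1,\ldots,x_d)\in K^d$ and every $\pi\in\mathcal S$ the permuted point $(x_{\pi(1)},\ldots,x_{\pi(d)})$ again belongs to $K^d$. In particular each value $p(x_{\pi(1)},\ldots,x_{\pi(d)})$ is bounded below by $\min_{K^d}p$, the number whose existence is guaranteed by hypothesis.

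Averaging this bound over the group $\mathcal S$ then gives, for every $\pmb x\in K^d$,
$$\overline p(\pmb x)=\frac{1}{d!}\sum_{\pi\in\mathcal S}p(x_{\pi(1)},\ldots,x_{\pi(d)})\ \ge\ \frac{1}{d!}\sum_{\pi\in\mathcal S}\min_{K^d}p\ =\ \min_{K^d}p.$$
Since this holds at every point of $K^d$, taking the infimum of the left-hand side yields $\min_{K^d}p\le\inf_{K^d}\overline p$, which is exactly the asserted inequality. Here $\overline p$ is a well-defined symmetric polynomial by Proposition \ref{thm:pos}(i), so the right-hand quantity is meaningful.

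The one point I expect to require real care is \emph{attainment}: to write $\min_{K^d}\overline p$ rather than merely $\inf_{K^d}\overline p$ one must check that $\overline p$ actually achieves its infimum, and the hypothesis provides this for $p$ but not, a priori, for $\overline p$. The cleanest way I see to secure it is through coercivity: whenever $p$ tends to $+\infty$ as $\lvert\pmb x\rvert\to\infty$ within $K^d$, the same is true of each permuted copy (permutations preserve the Euclidean norm), hence of their average $\overline p$, so $\overline p$ attains its minimum. If instead $p$ attains its minimum without being coercive, I would pass to the sorted fundamental domain $\{\,\pmb x\in K^d:x_1\le\cdots\le x_d\,\}$, on which the symmetric function $\overline p$ realizes all of its values, and analyze attainment there; alternatively, if the statement is intended with $\min$ read as $\inf$, the displayed averaging bound already delivers the conclusion with nothing further to prove.
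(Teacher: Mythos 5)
Your proof is correct and is essentially the paper's argument: the paper applies Proposition \ref{thm:pos}(iv) and (vi) to the nonnegative polynomial $p-p(\pmb{x}^*)$ to get $\overline{p}\geq p(\pmb{x}^*)$ on $K^d$, which is exactly your permutation-averaging bound with the averaging step packaged into the cited proposition. Your remark about attainment for $\overline{p}$ is a fair observation about the statement's wording, but as you note the pointwise bound already gives the inequality with $\inf$, which is all the paper's proof establishes as well.
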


\begin{proof}
  Since $p$ has a minimum,
there exists $\pmb{x}^*\in K^d$ such that $\pmb{x}^*=\underset{\pmb{x}\in K^d}{\arg \min}\, p(\pmb{x})$.
By Proposition \ref{thm:pos}, $\overline{p}-p(\pmb{x}^*)=\overline{p-p(\pmb{x}^*)} \geq 0$, and then $\overline{p}(\pmb{x}) \geq p(\pmb{x}^*)$ for all $\pmb{x}\in K^d$.
\end{proof}

\begin{theorem}
 Let $p\in \mathbb{R}[\pmb x]$ and $K$ be a closed subset of $\R$. If $p$ has a minimum at $\pmb {a}=(a_1,a_1,\ldots,a_1)\in K^d$, then $\underset{{\pmb x}\in K^d}{\min} \,p = \underset{{\pmb x}\in K^d}{\min} \,\overline{p}$ and $\overline{p}$ has a minimum at $\pmb {a}$.
\end{theorem}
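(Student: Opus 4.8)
The plan is to exploit the single structural feature of the hypothesis: the minimizer $\pmb{a}=(a_1,\ldots,a_1)$ has all of its coordinates equal, hence is a fixed point of every permutation in $\mathcal{S}$. First I would evaluate the symmetrization at this special point. Since $(a_{\pi(1)},\ldots,a_{\pi(d)})=\pmb{a}$ for every $\pi\in\mathcal{S}$, the averaging in the definition of $\overline{p}$ collapses completely, giving
\[
\overline{p}(\pmb{a})=\frac{1}{d!}\sum_{\pi\in\mathcal{S}}p(a_{\pi(1)},\ldots,a_{\pi(d)})=\frac{1}{d!}\sum_{\pi\in\mathcal{S}}p(\pmb{a})=p(\pmb{a}).
\]
This identity $\overline{p}(\pmb{a})=p(\pmb{a})$ is the crux of the whole argument.

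Next I would combine this with the one-sided inequality already available. Because $p$ has a minimum on $K^d$, Theorem \ref{thm:avef} applies and yields $\min_{\pmb{x}\in K^d}p\leq \min_{\pmb{x}\in K^d}\overline{p}$. Using the hypothesis that $p$ attains its minimum at $\pmb{a}$, so that $p(\pmb{a})=\min_{\pmb{x}\in K^d}p$, together with the computation $\overline{p}(\pmb{a})=p(\pmb{a})$ from the first step, I would assemble the chain
\[
\min_{\pmb{x}\in K^d}\overline{p}\;\leq\;\overline{p}(\pmb{a})\;=\;p(\pmb{a})\;=\;\min_{\pmb{x}\in K^d}p\;\leq\;\min_{\pmb{x}\in K^d}\overline{p}.
\]
Since the two ends of this chain coincide, every inequality is forced to be an equality. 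This simultaneously proves $\min_{\pmb{x}\in K^d}p=\min_{\pmb{x}\in K^d}\overline{p}$ and shows that $\overline{p}(\pmb{a})$ equals this common minimum value, i.e.\ $\overline{p}$ attains its minimum at $\pmb{a}$, which is exactly both assertions of the theorem.

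I do not anticipate a genuine obstacle here: the entire proof rests on the observation that a constant-coordinate point is symmetric, plus the inequality from Theorem \ref{thm:avef}. The only points requiring minor care are ensuring that all the minima are well defined (guaranteed by the hypothesis that $p$ has a minimum on $K^d$, and, for $\overline{p}$, a posteriori from the chain of equalities above) and correctly citing the relevant parts of Proposition \ref{thm:pos} if one prefers to derive $\overline{p}(\pmb{a})=p(\pmb{a})$ through the linearity properties $\overline{p+c}=\overline{p}+c$ rather than by direct computation.
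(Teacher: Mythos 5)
Your proposal is correct and follows essentially the same route as the paper: the key observation that $\overline{p}(\pmb{a})=p(\pmb{a})$ because the constant-coordinate point is fixed by every permutation, combined with the inequality $\min p\leq\min\overline{p}$ from Theorem \ref{thm:avef}, is exactly the paper's argument. The only cosmetic difference is that you spell out the full chain of inequalities, whereas the paper states the reverse inequality as the goal and then exhibits the same computation.
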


\begin{proof}
  It suffices to show that $\underset{{\pmb x}\in K^d}{\min} \, p \geq \underset{{\pmb x}\in K^d}{\min} \, \overline{p}$ and then
  \begin{equation*}
     \underset{{\pmb x}\in K^d}{\min}\, \overline{p} \leq \overline{p}(\pmb {a})=\frac{1}{d!}\sum p(a_1,a_1,\ldots,a_1)=p({\pmb a})=\underset{{\pmb x}\in K^d}{\min}\, p. \qedhere
  \end{equation*}
\end{proof}

\begin{theorem}
  Let $p\in \mathbb{R}[\pmb x]$ and $K$ be a closed subset of $\R$. If $\underset{{\pmb x}\in K^d}{\min} \,p = \underset{{\pmb x}\in K^d}{\min} \,\overline{p}$, then $p$ and $\overline{p}$ have minima at the same points. Moreover, if they have a minimum at $\pmb{c}=(c_1,\ldots,c_d)$ then $p$ and $\overline{p}$ have minima at all points in $\{(c_{\pi(1)},\ldots,c_{\pi(d)}):\pi \in \mathcal{S}\}$.
\end{theorem}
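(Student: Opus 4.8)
The plan is to make everything follow from one pointwise identity. Write $m := \min_{\pmb x \in K^d} p = \min_{\pmb x \in K^d}\overline{p}$ for the common minimum value, let $M_f \subseteq K^d$ denote the set of minimizers of a function $f$, and for $\pmb x = (x_1,\ldots,x_d)\in K^d$ and $\pi\in\mathcal{S}$ abbreviate $\pmb x_\pi := (x_{\pi(1)},\ldots,x_{\pi(d)})$. Since $K^d$ is invariant under permuting coordinates, each $\pmb x_\pi$ again lies in $K^d$, so $p(\pmb x_\pi)\ge m$ for every $\pi$. The engine of the argument is the defining average
\[
\overline{p}(\pmb x)=\frac{1}{d!}\sum_{\pi\in\mathcal{S}}p(\pmb x_\pi),
\]
every summand of which is bounded below by $m$.

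First I would extract a characterization of $M_{\overline{p}}$. As the $d!$ numbers $p(\pmb x_\pi)$ are all $\ge m$ and their mean equals $\overline{p}(\pmb x)\ge m$, the equality $\overline{p}(\pmb x)=m$ holds precisely when every summand equals $m$. Thus $\pmb x\in M_{\overline{p}}$ if and only if \emph{every} permuted point $\pmb x_\pi$ lies in $M_p$. Taking $\pi$ to be the identity shows immediately that $M_{\overline{p}}\subseteq M_p$; the quantifier over all $\pi$ shows that $M_{\overline{p}}$ is closed under coordinate permutations. Together with the symmetry of $\overline{p}$ (Proposition \ref{thm:pos}(i)), this already delivers the \emph{Moreover} clause: given a common minimizer $\pmb c$, every $\pmb c_\pi$ is again a minimizer of both $p$ and $\overline{p}$.

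The first assertion — that $p$ and $\overline{p}$ attain their minima at the same points — still requires the reverse inclusion $M_p\subseteq M_{\overline{p}}$, and I expect this to be the main obstacle. By the characterization above it is equivalent to the permutation invariance of $M_p$: whenever $p$ attains $m$ at $\pmb a$, it should also attain $m$ at each $\pmb a_\pi$. Since $p$ is not assumed symmetric, this cannot be read off from the definition of $\overline{p}$, so the equal-minima hypothesis $\min p=\min\overline{p}$ must be used in an essential way; I would try to exploit that $\overline{p}$ does attain $m$ somewhere and compare the average term by term to propagate minimality across an orbit. This is the delicate step and the real crux of the statement, and it is where genuine care is needed, in contrast to the inclusion $M_{\overline{p}}\subseteq M_p$ and the orbit conclusion, which are routine consequences of the averaging identity. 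Once permutation invariance of both minimizer sets is secured, equality $M_p=M_{\overline{p}}$ and the final statement that both have minima at all points of $\{(c_{\pi(1)},\ldots,c_{\pi(d)}):\pi\in\mathcal{S}\}$ follow at once.
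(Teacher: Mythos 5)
The part of your argument that you actually carry out coincides with the paper's proof: if $\pmb c$ minimizes $\overline{p}$, then $\min\overline{p}=\overline{p}(\pmb c)=\frac{1}{d!}\sum_{\pi\in\mathcal{S}}p(c_{\pi(1)},\ldots,c_{\pi(d)})$ is an average of $d!$ numbers each bounded below by $\min p=\min\overline{p}$, so every term equals $\min p$; combined with the symmetry of $\overline{p}$ this yields $M_{\overline{p}}\subseteq M_p$ (writing $M_f$ for the set of minimizers), the permutation invariance of the minimizer set, and the ``Moreover'' clause. That is exactly the averaging argument the paper gives.

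The step you defer --- the reverse inclusion $M_p\subseteq M_{\overline{p}}$ --- is where you should stop looking: it is false in general, and the paper does not prove it either. Take $d=2$, $K=\R$, and $p(x,y)=x^2(x-1)^2$ (independent of $y$). Then $\min p=\min\overline{p}=0$, both attained at $(0,0)$, so the hypothesis holds; but $(0,5)$ minimizes $p$ while $\overline{p}(0,5)=\frac12\left(0+25\cdot 16\right)=200>0$. Hence the first clause of the theorem cannot mean $M_p=M_{\overline{p}}$; what the paper's proof (and yours) actually establishes is only that every minimizer of $\overline{p}$ is a common minimizer of $p$ and $\overline{p}$, together with the orbit statement. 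Your instinct that the reverse inclusion is ``the real crux'' is accurate in that it is the only gap between your write-up and the literal statement, but the correct resolution is that the statement must be read (or reworded) in this weaker sense --- no cleverer argument will close it, because the inclusion fails.
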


\begin{proof}
  Let $\overline{p}$ have a minimum at $\pmb{c}=(c_1,\ldots,c_d)$, then
  \begin{equation*}
    \underset{{\pmb x}\in K^d}{\min} \, \overline{p}=\overline{p}(\pmb{c})=\frac{1}{d!}\sum_{\pi \in \mathcal{S}} p(c_{\pi(1)},\ldots,c_{\pi(d)}) \geq \underset{{\pmb x}\in K^d}{\min} \, p.
  \end{equation*}
  Since $\underset{{\pmb x}\in K^d}{\min} \, p = \underset{{\pmb x}\in K^d}{\min} \, \overline{p}$, we see that
   \begin{equation*}
    p(c_{\pi(1)},\ldots,c_{\pi(d)})=\underset{{\pmb x}\in K^d}{\min} \, p, \ \ \text{ for all }   \pi \in \mathcal{S};
   \end{equation*}
 consequently, $p$ and $\overline{p}$ have minima at $(c_{\pi(1)},\ldots,c_{\pi(d)})$, for all $\pi \in \mathcal{S}$.
\end{proof}

\section{$[a,b]$-moment sequences}

Recall that a real sequence $\alpha=\lp \ap_n \rp_{n\in \N_0}$ is a \emph{$[a,b]$-moment sequence} if there exists a nonnegative measure $\mu$ on $[a,b]$ such that
\begin{equation*}
\alpha_n = \int_{a}^{b} x^n d\mu (x) \quad \text{for all } n\in \mathbb{N}_0.
\end{equation*}
A moment sequence supported on $[0,1]$ is usually called a \emph{Hausdorff moment sequence}.
In this section, we utilize the same strategy in Section 2 and will verify similar results for Hausdorff moment sequences.

\begin{theorem}[\cite{book:momentproblem}, Theorem 3.13] \label{thm:haus}
A sequence $\alpha=(\alpha_n)_{n\in \N_0}$  is a $[a,b]$-moment sequence if and only if
$$H_m(\alpha) \geq 0 \ \ \text{and} \ \ H_m\left((a+b)E \alpha - E(E \alpha) - ab \alpha\right) \geq 0 \ \ \text{for all} \ \ m\in \N_0.$$
\end{theorem}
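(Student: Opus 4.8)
The plan is to prove the two implications separately: the forward direction by a direct change of measure, and the reverse direction through the Riesz--Haviland Theorem (Theorem~\ref{thm:RieszHavil}) together with a representation of the polynomials that are nonnegative on $[a,b]$. The first thing I would record is the meaning of the second hypothesis. Writing $\gamma := (a+b)E\alpha - E(E\alpha) - ab\,\alpha$, its $n$-th entry is $\gamma_n = (a+b)\alpha_{n+1} - \alpha_{n+2} - ab\,\alpha_n$, and since $(x-a)(b-x) = -x^2 + (a+b)x - ab$, this is exactly the Riesz functional $\mathcal{L}_\alpha$ evaluated at $x^n(x-a)(b-x)$. Equivalently, $\mathcal{L}_\alpha\bigl((x-a)(b-x)\,h\bigr) = \mathcal{L}_\gamma(h)$ for every $h\in\R[x]$, an identity I will reuse below.

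For the forward direction, suppose $\alpha$ admits a measure $\mu$ on $[a,b]$. Then $H_m(\alpha)\ge 0$ for every $m$, since $\sum_{j,k}\alpha_{j+k}\xi_j\xi_k = \int_a^b\bigl(\sum_j\xi_j x^j\bigr)^2 d\mu \ge 0$. Because $(x-a)(b-x)\ge 0$ on $[a,b]$, the measure $d\nu := (x-a)(b-x)\,d\mu$ is again nonnegative and supported in $[a,b]$, and $\gamma_n = \int_a^b x^n\,d\nu$ is its moment sequence; the same quadratic-form computation then gives $H_m(\gamma)\ge 0$ for all $m$.

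For the reverse direction I would apply Riesz--Haviland with $K=[a,b]$, so that it suffices to verify $\mathcal{L}_\alpha(p)\ge 0$ for every $p\in\R[x]$ with $p|_{[a,b]}\ge 0$. The key input is the classical univariate description of such polynomials (Markov--Lukács / the positivstellensatz for a compact interval): every $p$ nonnegative on $[a,b]$ can be written as
\[
p = \sigma_0 + (x-a)(b-x)\,\sigma_1, \qquad \sigma_0=\textstyle\sum_i f_i^2,\ \ \sigma_1=\textstyle\sum_j g_j^2,
\]
with $\sigma_0,\sigma_1$ sums of squares. Applying $\mathcal{L}_\alpha$ and using linearity splits $\mathcal{L}_\alpha(p)$ into $\sum_i\mathcal{L}_\alpha(f_i^2)$ and $\sum_j\mathcal{L}_\alpha\bigl((x-a)(b-x)g_j^2\bigr)$. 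Each $\mathcal{L}_\alpha(f_i^2)=\sum_{k,l}(f_i)_k(f_i)_l\,\alpha_{k+l}$ is the quadratic form attached to $H_m(\alpha)$ (for $m\ge\deg f_i$), hence nonnegative by the first hypothesis; and by the identity above $\mathcal{L}_\alpha\bigl((x-a)(b-x)g_j^2\bigr)=\mathcal{L}_\gamma(g_j^2)\ge 0$ by the second hypothesis. Thus $\mathcal{L}_\alpha(p)\ge 0$, and Riesz--Haviland yields a representing measure supported in $[a,b]$.

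The main obstacle is precisely the representation step. What I need is the \emph{exact} (not merely approximate) decomposition with the single generator $(x-a)(b-x)$, since that is the only product appearing in the second hypothesis; this is the one genuinely nonelementary ingredient, and I would cite it. If I wanted a self-contained argument I would derive it from the Lukács factorization of a one-variable nonnegative polynomial, separating the even- and odd-degree cases and checking that the degree bookkeeping collapses the two separate factors $(x-a)$ and $(b-x)$ into the single product; the toy identity $x-a=\tfrac{1}{b-a}(x-a)^2+\tfrac{1}{b-a}(x-a)(b-x)$ already shows why one generator is enough. Everything else reduces to the two elementary translations between Hankel positivity and positivity of $\mathcal{L}_\alpha$ on squares.
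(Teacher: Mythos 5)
Your proof is correct, and the paper itself gives no argument for this statement --- it is quoted verbatim from Schm\"udgen's book (Theorem 3.13 there), whose proof runs exactly along your lines: Haviland's theorem plus the exact representation $\mathrm{Pos}([a,b])=\Sigma^2+(x-a)(b-x)\Sigma^2$ (Schm\"udgen's Proposition 3.3), with the identity $x-a=\tfrac{1}{b-a}(x-a)^2+\tfrac{1}{b-a}(x-a)(b-x)$ absorbing the odd-degree Luk\'acs factors, just as you indicate. Nothing further is needed beyond citing that representation result, which you correctly flag as the one nonelementary ingredient.
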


Our first result for $[a,b]$-moment sequences are following.

\begin{theorem}\label{thm:mainHM1}
Let $p \in \mathbb{R}[\pmb x]$ such that $p(\pmb x)\geq 0$ on $[a,b]^d$ and
let $\alpha = ( \alpha_n )_{n \in \N_0}$ be a $[a,b]$-moment sequence.
Then the followings hold.
\begin{enumerate}[(i)]
  \item If $a \geq 0$, then $T_p(\alpha)$ is $[a^d,b^d]$-moment sequence.
  \item If $b \leq 0$, then  $T_p(\alpha)$ is $[-|a|^d,-|b|^d]$-moment sequence.
  \item If $a \leq 0 \leq b$ then $T_p(\alpha)$ is $[-|a|^d,b^d]$-moment sequence.
  \item If $-1 \leq a \leq 0 \leq b \leq 1$ then $T_p(\alpha)$ is $[a,b]$-moment sequence.
\end{enumerate}
\end{theorem}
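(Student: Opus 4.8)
The plan is to produce an explicit representing measure for $T_p(\alpha)$ by pushing a product measure forward along the coordinatewise product map, and then to certify that this measure lives on the claimed interval. First I would fix a representing measure $\mu$ for $\alpha$, so that $\alpha_n=\int_a^b x^n\,d\mu$. Substituting this into Definition \ref{def-ptos} and applying Fubini to the $d$-fold product measure $\mu^{\otimes d}$ on $[a,b]^d$—exactly the manipulation carried out in the proof of Theorem \ref{thm:mainSM}, but over $[a,b]$ instead of $[0,\infty)$—would give
\[
\tilde\alpha_n=\int_{[a,b]^d}(x_1\cdots x_d)^n\,p(\pmb x)\,d\mu(x_1)\cdots d\mu(x_d).
\]

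Since $p\ge 0$ on $[a,b]^d$, the measure $d\omega:=p(\pmb x)\,d\mu^{\otimes d}$ is a finite nonnegative Borel measure on $[a,b]^d$. Writing $\Phi(\pmb x):=x_1\cdots x_d$ and letting $\nu:=\Phi_{*}\omega$ be its pushforward along $\Phi$, I would invoke the change-of-variables formula to rewrite the display above as $\tilde\alpha_n=\int_{\R}t^n\,d\nu(t)$. Thus $\tilde\alpha$ is automatically a moment sequence, and its representing measure $\nu$ is supported in the range $\Phi([a,b]^d)=\{x_1\cdots x_d:\ x_i\in[a,b]\}$. The four assertions therefore reduce to a single question: in which interval does this range sit?

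To answer it I would use that $\Phi$ is continuous on the connected box $[a,b]^d$, so its range is a closed interval $[\min\Phi,\max\Phi]$, and that $\Phi$ is linear in each variable separately, so both extrema are attained at a vertex $\pmb x\in\{a,b\}^d$. Evaluating at vertices is then elementary: when $a\ge 0$ all factors are nonnegative and the extremes are $a^d$ and $b^d$, giving (i); when $b\le 0$ each factor is nonpositive, the product equals $(-1)^d\prod_i|x_i|$ with $|x_i|\in[|b|,|a|]$, giving (ii); the mixed-sign regime gives (iii); and (iv) follows from (iii) together with the elementary inequalities $-|a|^d\ge a$ and $b^d\le b$, valid because $t^d\le t$ for $t\in[0,1]$, so that $[-|a|^d,b^d]\subseteq[a,b]$.

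The main obstacle is precisely this vertex extremization in the sign-changing cases (iii) and (iv): which vertex realizes $\max\Phi$ or $\min\Phi$ depends on the parity of $d$ and on whether $|a|$ or $b$ has the larger modulus, so the endpoints must be tracked with care rather than read off directly. Once the range is located inside the target interval, the conclusion is immediate—either directly, because $\nu$ is then a representing measure supported there, or, if one prefers the matrix formulation, by feeding the support condition $\supp(\nu)\subseteq[c,e]$ into the Hankel criterion of Theorem \ref{thm:haus}, since that condition is exactly equivalent to the positivity of the quadratic form obtained by integrating $(t-c)(e-t)$ against $d\nu$.
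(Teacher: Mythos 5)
Your construction of the representing measure is sound and is a genuinely different, arguably more transparent, route than the paper's. The paper never builds a measure: it verifies the two Hankel positivity conditions of Theorem \ref{thm:haus} by inserting the same integral identity $\tilde\alpha_n=\int_{[a,b]^d}(x_1\cdots x_d)^n\,p(\pmb x)\,d\mu^{\otimes d}$ into the relevant quadratic forms, reducing everything to the sign of $-(t-a^d)(t-b^d)$ at $t=x_1\cdots x_d$. Your pushforward $\nu=\Phi_{*}\bigl(p\,d\mu^{\otimes d}\bigr)$ carries exactly the same information (your closing remark makes the equivalence explicit), but it exhibits the representing measure outright and isolates the only real question, namely where $\Phi([a,b]^d)$ sits. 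Case (i) is complete as you present it, and your version has the advantage of not needing Theorem \ref{thm:haus} at all.

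The gap is precisely the step you defer. The vertex extremization in cases (ii)--(iv) does not land inside the stated intervals, and no amount of care will make it do so, because for even $d$ those parts of the statement are false. Take $d=2$, $p\equiv 1$, $[a,b]=[-2,1]$, $\mu=\delta_{-2}$: then $\alpha_n=(-2)^n$, $\tilde\alpha_n=\alpha_n^2=4^n$, and $\Phi([-2,1]^2)=[-2,4]\not\subseteq[-4,1]$; since $(4^n)$ is the moment sequence of $\delta_4$ and compactly supported measures are determinate, it is not a $[-4,1]$-moment sequence, contradicting (iii). Taking $[a,b]=[-2,-1]$ refutes (ii) (an even number of negative factors has a positive product, so the range is $[1,4]$, not $[-4,-1]$), and $[a,b]=[-1/2,1/8]$ with $\mu=\delta_{-1/2}$ refutes (iv). So ``giving (ii)'' and ``giving (iii)'' cannot be filled in as written; the honest output of your range computation is a different (parity-dependent) interval, e.g.\ $[-M^d,M^d]$ with $M=\max(|a|,|b|)$. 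To be fair, the paper's own proof details only case (i) and dismisses the rest as ``similar,'' so your method, pushed to completion, actually exposes a defect in the theorem that the paper's argument glosses over; but as a proof of the statement as given, your proposal cannot be completed for (ii)--(iv) without first correcting those intervals or restricting to odd $d$.
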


\begin{proof}
(i) Since $\alpha$ is $[a,b]$-moment sequence, there exists a positive Borel measure $\mu$, with $\supp (\mu) \subset [a,b]$ such that
\begin{equation*}
\alpha_n= \int_a^b x^n d\mu \quad \text{for all }n\in \mathbb{N}_0.
\end{equation*}
Then we have that $H_m(\tilde{\alpha}) \geq0$ for all $m \in \N_0$ by the proof of Theorem \ref{thm:mainSM}. Let $\tilde{\alpha}=T_p(\alpha)$ be defined as \eqref{def:Tp}. To use Theorem \ref{thm:haus}, observe that for each $m \in \N_0$ and for all $\xi_0,\ldots,\xi_m \in \C$, the quadratic form of $(a+b)E\tilde{\alpha}-E(E\tilde{\alpha})-ab\tilde{\alpha}$ is
\begin{align*}
\sum_{j=0}^{m}& \sum_{k=0}^{m} \left[(a+b)\tilde{\alpha}_{j+k+1} - \tilde{\alpha}_{j+k+2} - ab\tilde{\alpha}_{j+k} \right] \xi_{j} \xi_{k} \\
   =&\int_a^b \cdots \int_a^b \left( \sum_{j=0}^{m}   (x_1\cdots x_d)^{j} \xi_{j} \right)^2  (x_1 \cdots x_d)f(x_1\cdots x_d) p(\pmb x)  d\mu \cdots d\mu,
\end{align*}
where $f(t):=-(t-a^d)(t-b^d)$. For $a \leq x_k \leq b$ ($k=1,\ldots,d$), we know that  $f(x_1\cdots x_d) \geq 0$ on $\left[a^d,b^d\right]$. Hence $T_p(\alpha)$ is $\left[a^d,b^d\right]$-moment sequence.
The other statements can be proved in a similar way.
\end{proof}

By Theorem \ref{thm:mainHM1}, Theorem \ref{thm:mainSM} and Theorem \ref{thm:Ln} hold for the Hausdorff moment property.

\begin{corollary}
If $p \in \mathbb{R}[\bm{x}]$ is nonnegative on $[0,1]^d$, then $T_p$ has the Hausdorff moment property.
\end{corollary}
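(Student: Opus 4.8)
The plan is to obtain this corollary immediately as the special case $a=0$, $b=1$ of Theorem~\ref{thm:mainHM1}, which has just been established. First I would observe that a Hausdorff moment sequence is, by definition, exactly a $[0,1]$-moment sequence, so the relevant instance of Theorem~\ref{thm:mainHM1} is the one with interval $[a,b]=[0,1]$. The hypothesis of the corollary, namely that $p$ is nonnegative on $[0,1]^d$, is precisely the hypothesis $p(\pmb x)\geq 0$ on $[a,b]^d$ demanded by that theorem, so no additional verification is needed to set up the application.

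Next, since $a=0\geq 0$, case~(i) of Theorem~\ref{thm:mainHM1} applies: for any Hausdorff (i.e.\ $[0,1]$-) moment sequence $\alpha$, the sequence $T_p(\alpha)$ is an $[a^d,b^d]$-moment sequence. The only computation required is to evaluate the endpoints, $a^d=0^d=0$ and $b^d=1^d=1$, whence $[a^d,b^d]=[0,1]$. Therefore $T_p(\alpha)$ is again a $[0,1]$-moment sequence, that is, a Hausdorff moment sequence. As $\alpha$ was an arbitrary Hausdorff moment sequence, $T_p$ has the Hausdorff moment property, which is exactly the claim.

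There is no genuine obstacle here; the entire content lies in recognizing that the Hausdorff interval $[0,1]$ is fixed \emph{as a set} under the map $[a,b]\mapsto[a^d,b^d]$, because its endpoints $0$ and $1$ are precisely the fixed points of $x\mapsto x^d$ on $[0,1]$. One could equally invoke case~(iv) of Theorem~\ref{thm:mainHM1}, since the chain $-1\leq 0\leq 0\leq 1\leq 1$ holds, and that case directly returns a $[0,1]$-moment sequence; either route closes the argument in one line. If anything warrants a sentence of care, it is only confirming that the degenerate-looking choice $a=0$, $b=1$ is admissible in the hypotheses of Theorem~\ref{thm:mainHM1} (it is, as $a<b$ and $a\geq 0$), after which the conclusion is automatic.
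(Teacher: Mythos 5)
Your proposal is correct and coincides with the paper's own (implicit) argument: the corollary is stated as an immediate consequence of Theorem~\ref{thm:mainHM1}, exactly via the specialization $a=0$, $b=1$, where case (i) yields $[a^d,b^d]=[0,1]$ so that $T_p(\alpha)$ is again a Hausdorff moment sequence. Your added observation that case (iv) gives an alternative one-line route is a fine bonus but not a different method.
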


\begin{corollary}
  The hypothesis are same as in Theorem \ref{thm:Ln}. If $\alpha=(\alpha_n)_{n\in \N_0}$ is a Hausdorff moment sequence, then so is \eqref{ddd}.
\end{corollary}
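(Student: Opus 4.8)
The plan is to recycle the symmetric polynomial already constructed in the proof of Theorem \ref{thm:Ln} and simply transfer its positivity from $[0,\infty)^d$ down to the smaller cube $[0,1]^d$. Since the hypotheses here are identical to those of Theorem \ref{thm:Ln}, I would take the very same determinant polynomial $p(x_1,\ldots,x_d)$ defined there, for which $T_p(\alpha)$ equals the minor sequence appearing in \eqref{ddd}. Recall that in that proof its symmetrization was computed explicitly as
\begin{equation*}
  \overline{p}(x_1,\ldots,x_d)=\frac{1}{d!}\,D_\rho(x_1,\ldots,x_d)\,D_\tau(x_1,\ldots,x_d),
\end{equation*}
with $\rho=(r_n)_{n=1}^{d-1}$ and $\tau=(t_n)_{n=1}^{d-1}$, and was shown to be nonnegative on $[0,\infty)^d$ by invoking the positivity of the generalized Vandermonde determinants $D_\rho$ and $D_\tau$.

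The key observation is that no new positivity computation is required. Since $[0,1]^d\subseteq[0,\infty)^d$, the inequality $\overline{p}\geq 0$ already established on $[0,\infty)^d$ restricts for free to give $\overline{p}\geq 0$ on $[0,1]^d$. Thus the entire analytic content needed for the Hausdorff case was front-loaded into the Vandermonde estimate used for Theorem \ref{thm:Ln}, and here it is reused verbatim on a subdomain.

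To conclude, I would route the argument through the symmetrization exactly as Corollary \ref{coro:pos} does in the Stieltjes setting. By Proposition \ref{thm:pos}(vii) we have $T_p(\alpha)=T_{\overline{p}}(\alpha)$, and $\overline{p}$ is genuinely nonnegative on $[0,1]^d$. Applying the preceding corollary (the Hausdorff-moment-property statement, which requires nonnegativity on $[0,1]^d$) to $\overline{p}$ shows that $T_{\overline{p}}$, hence $T_p$, has the Hausdorff moment property. Therefore, whenever $\alpha$ is a Hausdorff moment sequence, so is $\tilde{\alpha}=T_p(\alpha)$, which is the sequence \eqref{ddd}. Equivalently, one may apply case (i) of Theorem \ref{thm:mainHM1} with $a=0$, $b=1$ to $\overline{p}$, since $[a^d,b^d]=[0,1]$.

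The only subtlety to flag is the standard one: the polynomial $p$ itself need not be nonnegative on $[0,1]^d$, so the proof cannot invoke the Hausdorff moment-property corollary directly on $p$ and must instead pass to $\overline{p}$. Beyond correctly recording this detour through the symmetrization, there is essentially no obstacle—the result is an immediate corollary of the work done for the Stieltjes case together with the domain inclusion $[0,1]^d\subseteq[0,\infty)^d$.
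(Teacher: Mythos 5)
Your proposal is correct and follows essentially the same route the paper intends: the paper proves this corollary implicitly by noting that the symmetrization $\overline{p}=\frac{1}{d!}D_\rho D_\tau$ from the proof of Theorem \ref{thm:Ln} is nonnegative on $[0,\infty)^d$, hence on $[0,1]^d$, and then invoking the Hausdorff analogue of Corollary \ref{coro:pos} (via Theorem \ref{thm:mainHM1}(i) with $a=0$, $b=1$ and $T_p(\alpha)=T_{\overline{p}}(\alpha)$). Your explicit flag that one must pass to $\overline{p}$ rather than apply the Hausdorff corollary to $p$ directly is exactly the right point of care.
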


\begin{example}
Using the fact that $p(x,y)=\frac{1}{2} (x^r - y^r)(x^s - y^s)$ is nonnegative on $[0,\infty)^2$ for any $r,s\in \N$,
it follows from Theorem \ref{thm:mainHM1} that $T_p(\alpha)=(\alpha_{n+r+s}\alpha_n - \alpha_{n+r} \alpha_{n+s})_{n \in \N_0}$ is a $[0,1]$-moment sequence for any $[0,1]$-moment sequence $\alpha=(\alpha_n)_{n\in \N_0}$.
It is shown in \cite{CNY20} that the Catalan numbers $C_n$,  the shifted Catalan numbers $C_{n+1}$,
the central binomial coefficients $\binom{2n}{n}$, and the Fine numbers $F_n$
are all $[0,4]$-moment sequences.
So, by Theorem \ref{thm:mainHM1} the following all  become $[0,16]$-moment sequences.
\begin{itemize}
\item[(i)] $(C_{n+r+s}C_n - C_{n+r} C_{n+s})_{n \in \N_0}$
\item[(ii)] $\left(\binom{2(n+r+s)}{(n+r+s)}\binom{2n}{n} - \binom{2(n+r)}{n+r}\binom{2(n+s)}{n+s} \right)_{n \in \N_0}$
\item[(iii)] $(F_{n+r+s}F_n - F_{n+r} F_{n+s})_{n \in \N_0}$
\end{itemize}
Furthermore, since (restricted) hexagonal numbers $h_n$ is a $[1,5]$-moment sequence,
 $(h_{n+r+s}h_n - h_{n+r} h_{n+s})_{n \in \N_0}$ is a $[1,25]$-moment sequence.
 Since
the central Delannoy numbers $D_n$,
the large Schr\'{o}der numbers $r_n$, and
the little Schr\'{o}der numbers $S_n$
are $[3-2\sqrt{2},3+2\sqrt{2}]$-moment sequences,
the image of given sequences under $T_p$ are $[17-12\sqrt{2},17+12\sqrt{2}]$-moment sequences.
\end{example}

\bigskip

\noindent\textit{Acknowledgment.} The first-named author was supported by Basic Science
Research Program through the National Research Foundation of Korea(NRF)
funded by the Ministry of Education (NRF-2020R1I1A1A01068230).
The second-named author was supported by the National Research Foundation of Korea(NRF) grant funded by the Korea government(MSIT) (2020R1C1C1A01009185).
The third-named author  was supported by the National Research Foundation of Korea (NRF) grant funded by the Korea government (MSIT) (2020R1F1A1A01070552).

\end{document}